\DeclareMathOperator*{\argmin}{arg\,min}
\theoremstyle{plain}
\newtheorem{theorem}{Theorem}
\theoremstyle{definition}
\newtheorem{definition}{Definition}
\title{\textbf{A robust algorithm for template curve estimation based on manifold embedding}}
\author[a]{Chlo\'e Dimeglio\thanks{\texttt{cd@geosys.com}}}
\author[a]{Santiago Gallón\thanks{\texttt{santiagog@udea.edu.co}}}
\author[a]{Jean-Michel Loubes\thanks{\texttt{jean-michel.loubes@math.univ-toulouse.fr}}}
\author[c]{Elie Maza\thanks{\texttt{Elie.Maza@ensat.fr}}}
\affil[a]{\small{Institut de Mathématiques de Toulouse, Universit\'e Toulouse III Paul Sabatier, Toulouse, France.}}
\affil[c]{Laboratoire G\'enomique et Biotechnologie des Fruits, UMR 990 INRA/INP-ENSAT, Universit\'e Toulouse III Paul Sabatier, Toulouse, France.}
\begin{document}
\maketitle

\begin{abstract}
\noindent This paper considers the problem of finding a meaningful template function that represents the common  pattern of a sample of curves. To address this issue, a novel algorithm based on a robust version of the isometric featuring mapping (Isomap) algorithm is developed. Assuming that the functional data lie on an intrinsically low-dimensional smooth manifold with unknown underlying  structure, we propose an approximation of the geodesic distance. This approximation is used to compute the corresponding empirical Fr\'{e}chet median function, which provides an intrinsic estimator of the template function.  Unlike the Isomap method, the algorithm has the advantage of being parameter free and easier to use. Comparisons with other methods, with both simulated and real datasets, show that the algorithm works well and outperforms these methods.\vskip .1in
\noindent \textbf{Key words: } Fr\'{e}chet median; functional data analysis;  Isomap.
%\noindent \textbf{Subject Class. MSC-2010:}\mysubjclass
\end{abstract}

\section{Introduction}
\label{section1}

Nowadays, experiments where the outcome constitutes a sample of functions $\{f_i(t)\colon t\in\mathcal{T}\subset\mathbb{R},\,i = 1,\ldots,n\}$ are more and more frequent. Such kind of functional data are now commonly encountered in speech signal recognition in engineering, growth curves analysis in biology and medicine, microarray experiments in molecular biology and genetics, expenditure and income studies in economics, just to name a few.

However, extracting the information conveyed by all the curves is a difficult task. Indeed  when finding a meaningful representative function that characterizes the common behavior of the sample, capturing its inner characteristics (as trends, local extrema and inflection points), a major difficulty comes from the fact that usually there are both amplitude (variation on the $y$-axis) and phase (variation on the $x$-axis) variations with respect to the common pattern, as pointed out in  \citet{Ramsay-Li-98}, \citet{Ramsay-Silverman-05}, or \citet{Vantini-12} for instance. Hence, in the two last decades, there has been a growing interest for statistical methodologies and algorithms to remove the phase variability and recover a single template conveying all the information in the data since the classical cross-sectional mean is not a good representative of the data  (see for instance \citet{Kneip-Gasser-92}). 

Two different kinds of methods have been developed for template function estimation. The first group  relies on the assumption that there exists a mean pattern from which all the observations differ, i.e an unknown function $f$ such that each observed curve is given by $f_{i}(t) = f\circ h_{i}(t)$, where $h_i$ are deformation functions. Hence finding this patten is achieved by aligning all curves $f_i$. This  method is known as \emph{curve registration}. In this direction, various curve registration methods have been proposed using different  strategies. When the warping operator is not specified, we refer  for instance to \citet{Kneip-Gasser-92}, \citet{Wang-Gasser-97}  \citet{Kneip-00},  \citet{James-07}, \citet{Tang-Muller-08}, and \citet{Kneip-Ramsay-08} or \citet{Dupuy-Loubes-Maza-11}. When a parametric model for the deformation is chosen, the statistical problem requires a semi-parametric approach through a self-modeling regression framework $f_{i}(t) = f(t,\theta_i)$ (see \citet{Kneip-Gasser-88}), where all functions are deduced with respect to the template $f$ by mean a finite-dimensional individual parameter vector $\theta_i$. This point of view is also followed in  \citet{Silverman-95}, \citet{Ronn-01}, \citet{Gamboa-Loubes-Maza-07}, \citet{Castillo2009},  \citet{Bigot2010b} and \citet{Trigano-Isserles-Ritov-11}.

The second category of methods do not assume any deformation model for the individual functions. The purpose is to select a curve which is assumed to be located at the {\it center} of the functions and estimate it directly from the data without stressing any particular curve. This is achieved for instance by \citet{Lopez-Romo-09} and \citet{ArribasGil-Romo-12} estimating the template based on the concept of depth for functional data as measure of centrality of the sample.

In this paper, we propose an alternative way based on the ideas of manifold learning theory. We assume that the observed functions can be modeled as variables with values in a manifold $\mathcal{M}$ with an unknown geometry. Although the manifold is unknown, the key property is that its underlying geometric structure is contained in the sample of observed curves so that the geodesic distance can be reconstructed directly from the data. The template curve estimation is then equivalent to consider a location measure of the data with respect to this geodesic distance, hence approximating the Fr\'echet mean or median of the data.
Recently, \citet{Chen-Muller-12} have also adopted a similar methodology appealing to the nonlinear manifold representation for functional data.
Several algorithms  have been developed over the last decade in order to reconstruct the natural embedding of data onto a manifold. Some of these are, for instance, the Isometric featuring mapping $-$Isomap$-$ (\citet{Tenenbaum-2000}), Local Linear Embedding $-$LLE$-$ (\citet{Roweis-Sau-00}), Laplacian Eigenmap (\citet{Belkin-Niyogi-03}), Hessian Eigenmap (\citet{Donoho-Grimes-03}), Diffusion maps
(\citet{Coifman-Lafon-06}), Local Tangent Space Alignment $-$LTSA$-$ (\citet{Zhang-Zha-04}), among others. In the following, we propose a robust version of the Isomap algorithm devoted to functional data, less sensitive to outliers and easier to handle. The performance of the algorithm is evaluated both on simulations and real data sets.

This article is organized as follows. The frame of our study is described in Section~\ref{s:section1}. Section~\ref{section2} is devoted to  the robust modification of the Isomap algorithm proposed to the metric construction of the approximated geodesic distance based on the observed curves. In Section~\ref{section3} we analyze the template estimation problem in a shape invariant model, showing that this issue can be solved using the manifold geodesic approximation procedure. In Section~\ref{section4}, the performance of our algorithm is studied using simulated data. In Section~\ref{section5}, several applications on real functional data sets are performed. Some concluding remarks are given in Section~\ref{section6}.

\section{Template estimation with a manifold embedding framework} \label{s:section1}
Consider discrete realizations of  functions $f_{i}$ observed at time $t_{j}\in \mathcal{T}$, with $\mathcal{T}$ a bounded interval of $\mathbb{R}$. % We assume that all curves are observed at the same time with the same occurrence, i.e. $t_{ij}=t_{j}$ and $j=1,\dots,m$. 
Set $X_{i} = \{ f_{i}(t_{j}),\:j = 1,\ldots,m \} \in \mathbb{R}^m$ for $i = 1,\ldots,n$. We assume that the data have a common structure which can be modeled as a manifold embedding. Hence the sample $\mathcal{E}=\left\{X_1,\dots,X_n\right\}$   consists of  i.i.d random variables sampled from a law $Q\in\mathcal{M}$ ,where  $\mathcal{M}$ is an unknown connected smooth submanifold of $\mathbb{R}^m$, endowed with the geodesic distance $\delta$ induced by the Riemannian metric $g$ on $\mathcal{M}\subset \mathbb{R}^m$ (see for instance \citet{Carmo1992}).

Under this geometrical framework, the statistical analysis of the curves   should be carried out carefully, using the intrinsic geodesic distance and not the Euclidean distance, see for instance \citet{Pennec2006} . In particular, an extension of the usual notion of central value from Euclidean spaces to arbitrary manifolds is based on the Fr\'{e}chet function, defined by
\begin{definition}[Fr\'{e}chet function]
Let $(\mathcal{M}, \delta)$ be a metric space and let $\alpha>0$ be a given real number. For a given probability measure $Q$ defined on the Borel $\sigma$-field of $\mathcal{M}$, the Fr\'{e}chet function of $Q$ is given by
\[
F_\alpha(\mu)=\int_{\mathcal{M}}\delta^{\alpha}(X,\mu)Q(\text{d}x),\qquad\mu\in\mathcal{M}.
\]
\end{definition}

For $\alpha=1$ and $\alpha=2$, the minimizers of $F_\alpha(\mu)$, if there exist, are called the Fr\'echet (or intrinsic) median and mean respectively.  Following \citet{Koenker2006}, in this paper we will particularly deal with the intrinsic median, denoted by $\mu_{I}^{1}(Q)$ to obtain a robust estimate for the template function $f\in\mathcal{M}$. Hence define  the corresponding empirical intrinsic median as
\begin{equation}\label{intrinsicStatistic0}
\widehat{\mu}_I^1=\argmin_{\mu\in\mathcal{M}}\sum_{i=1}^n\delta\left(X_i,\mu\right).
\end{equation}

However, the previous estimator relies on the unobserved manifold $\mathcal{M}$ and its underlying geodesic distance $\delta$. A popular estimator is given by the Isomap algorithm for $\delta$.  The idea is to build a simple metric graph from the data, which will be close enough from the manifold. Hence the approximation of the geodesic distance between two points  depends on the length of the edges of the graph which connect these points. The algorithm approximates the unknown geodesic distance $\delta$ between all pairs of points in $\mathcal{M}$ in terms of shortest path distance between all pairs of points in a nearest neighbor graph $\mathcal{G}$ constructed from the data points $\mathcal{E}$. If the discretization of the manifold contains enough points with regards to the curvature of the manifold, hence the graph distance will be a good approximation of the geodesic distance. For details about the Isomap algorithm, see \citet{Tenenbaum-2000}, \citet{BdSLT-00}, and \citet{Silva2003}.

The construction of the weighted neighborhood graph in the first step of the Isomap algorithm requires the choice of a parameter which controls the neighborhood size and therefore its success. This is made according to a $K-$rule (connecting each point with its $K$ nearest neighbors) or $\epsilon-$rule (connecting each point with all points lying within a ball of radius $\epsilon$) which are closely related to the local curvature of the manifold. Points which are  too distant to be  connected to the biggest graph are not used, making the algorithm unstable (see \citet{Balasubramanian2002}). In this paper we propose a robust version of this algorithm which leads to an approximation of the geodesic distance, $\hat{\delta}$. Our version does not exclude any point and does not require any additional tuning parameter.  This algorithm has been applied with success to align density curves in microarray data analysis (task known as normalization in bioinformatics) in \citet{Gallon-Loubes-Maza-13}. The construction of the approximated geodesic distance is detailed in Section~\ref{section2}.

Once an estimator of the geodesic distance is built, we propose to estimate the empirical Fr\'echet median by its approximated version
\begin{equation}\label{intrinsicStatisticempirical}
\widehat{\mu}_{I,n}^1=\argmin_{\mu\in\mathcal{G}}\sum_{i=1}^n \hat{\delta}\left(X_i,\mu\right).
\end{equation}

This estimator is restricted to stay within the graph $\mathcal{G}$ since the approximated geodesic distance is only defined on the graph. Hence we choose as a pattern of the observation the point which is at the {\it center} of the dataset, where center has to be understood with respect to the inner geometry of the observations.
\section{The robust manifold learning algorithm}
\label{section2}

Let $X$ be a random variable with values in an unknown connected and geodesically complete Riemannian manifold $\mathcal{M}\subset\mathbb{R}^m$, and a sample $\mathcal{E}=\{X_i\in\mathcal{M},\:i=1,\dots,n\}$ with distribution $Q$. Set $\mathrm{d}$ the Euclidean distance on $\mathbb{R}^m$ and $\delta$ the induced geodesic distance on $\mathcal{M}$. Our aim is to estimate  the geodesic distance between two points on the manifold $\delta\left(X_{i},X_{i'}\right)$ for all $i\neq i'\in \left\lbrace 1,\ldots,n \right\rbrace$.

The Isomap algorithm proposes to learn the manifold topology from a neighborhood graph. In the same way, our purpose is to approximate the geodesic distance $\delta$ between a pair of data points by the graph distance on the shortest path between the pair on the neighborhood graph. The main difference between our algorithm and the Isomap algorithm lies in the treatment of points which are far from the others. Indeed, the first step of the original Isomap algorithm consists in constructing the $K$-nearest neighbor graph or the $\epsilon$-nearest neighbor graph for a given positive integer $K$ or a real $\epsilon>0$, respectively and then to exclude  points which are not connected to the  graph. Such a step is not present in our algorithm since we consider that a distant point is not always considered an outlier. Hence, we do not exclude any points.   Moreover, a sensitive issue of the Isomap algorithm is that it requires the choice of the neighbor parameter ($K$ or $\epsilon$) which is closely related to the local curvature of the manifold, determining the quality of the construction (see, for instance, \citet{Balasubramanian2002}). In our algorithm, we give a tuning parameter free process to simplify the analysis.

The algorithm has three steps. The first step constructs a complete weighted graph associated to $\mathcal{E}$ based on Euclidean distances $\text{d}(X_{i},X_{i'})$ between all pairwise points $X_{i},X_{i'}\in\mathbb{R}^m$. It is a complete Euclidean graph $\mathcal{G}_{\text{E}}=\left(\mathcal{E},E\right)$ with set of nodes $\mathcal{E}$ and set of edges $E=\left\{\left\{X_{i},X_{i'}\right\},\:i=1,\dots,n-1,\:i'=i+1,\dots,n\right\}$ weighted with the corresponding Euclidean distances.

In the second step, the algorithm obtains the Euclidean Minimum Spanning Tree $\mathcal{G}_{\text{MST}}=\left(\mathcal{E},E_\text{T}\right)$ associated to $\mathcal{G}_{\text{E}}$, i.e. the spanning tree that minimizes the sum of the weights of the edges in the spanning tree of $\mathcal{G}_{\text{E}}$, $\sum_{\{X_{i},X_{i'}\}\in E_\text{T}}\mathrm{d}\left(X_{i},X_{i'}\right)$. The underlying idea in this construction is that, if two points $X_{i}$ and $X_{i'}$ are relatively close, then we have that $\delta\left(X_{i},X_{i'}\right)\approx\mathrm{d}\left(X_{i},X_{i'}\right)$. This may not be true if the manifold is very twisted and/or if too few points are observed, and may induce bad approximations. So the algorithm will produce a good approximation for relatively regular manifolds. This drawback is well known when dealing with graph-based approximations of the geodesic distance (\citet{Tenenbaum-2000}, and \citet{Silva2003}).

An approximation of $\delta\left(X_{i},X_{i'}\right)$ is provided by the sum of all the Euclidean distances of the edges of the shortest path on $\mathcal{G}_{\text{MST}}$ connecting $X_{i}$ to $X_{i'}$, i.e. $\hat{\delta}\left(X_{i},X_{i'}\right)=\min_{g_{ii'}\in \mathcal{G}_{\text{MST}}}L\left(g_{ii'}\right)$, where $L\left(g_{ii'}\right)$ denotes the length of a path $g_{ii'}$ connecting $X_{i}$ to $X_{i'}$ on $\mathcal{G}_\text{MST}$. However, this construction is highly unstable since the addition of new points may change completely the structure of the graph.

To cope with this problem, we propose in the third stage to add more robustness in the construction of the approximation graph.  Actually, in our algorithm  we add more edges between the data points to add extra paths and thus to cover better the manifold. The underlying idea is that paths which are close to the ones selected in the construction of the $\mathcal{G}_\text{MST}$ could also provide good alternate ways of connecting the edges. Closeness here is understood  as lying in open balls $B\left(X_i,\epsilon_i\right)\subset\mathbb{R}^m$ around the point $X_i$ with radius $\epsilon_i=\max_{\left\{X_{i},X_{i}\right\}\in E_\text{T}}\mathrm{d}\left(X_{i},X_{i'}\right)$. Hence, these new paths between the data are admissible and should be added to the edges of the graph. Finally, we obtain a new robustified graph $\mathcal{G}^\prime=\left(\mathcal{E},E^\prime\right)$ defined by
\[
\{X_{i},X_{i'}\}\in E^\prime\Longleftrightarrow\overline{X_{i}X_{i'}}\subset\bigcup_{i=1}^nB\left(X_i,\epsilon_i\right),
\]
where
\[
\overline{X_{i}X_{i'}}=\left\{X\in\mathbb{R}^m,\:\exists\lambda\in[0,1],\:X=\lambda X_{i}+(1-\lambda)X_{i'}\right\}.
\]

\noindent Finally, $\mathcal{G}^\prime$ is the graph which gives rise to our estimator of $\delta$, given by
\begin{equation}\label{estidelta}
\hat{\delta}\left(X_{i},X_{i'}\right)=\min_{g_{ii'}\in \mathcal{G}^\prime}L\left(g_{ii'}\right).
\end{equation}

Hence, $\hat{\delta}$ is the distance associated with $\mathcal{G}^\prime$, that is, for each pair of points $X_{i}$ and $X_{i'}$, we have $\hat{\delta}\left(X_{i},X_{i'}\right)=\mathrm{L}\left(\hat{\gamma}_{ii'}\right)$ where $\hat{\gamma}_{i}$ is the minimum length path between $X_{i}$ and $X_{i'}$ associated to $\mathcal{G}^\prime$. We point out that all points of the data sets are connected in the new graph $\mathcal{G}^\prime$.

\noindent A summary of the procedure is gathered in the Algorithm~\ref{algorithm}

\begin{algorithm}[!ht]
\caption{Robust approximation of $\delta$}
\label{algorithm}
\begin{algorithmic}[1] 
\REQUIRE $\mathcal{E}=\left\{X_{i}\in\mathbb{R}^{m},\: i=1,\ldots,n\right\}$
\ENSURE  $\hat{\delta}$
\STATE	 Calculate $\text{d}(X_{i},X_{i'})=\|X_{i}-X_{i'}\|_{2}$ between all pairwise data points $X_{i}$ and $X_{i'}$, $i=1,\dots,n-1,\:i'=i+1,\dots,n$, and construct the complete Euclidean graph $\mathcal{G}_\text{E}=\left(\mathcal{E},E\right)$ with set of edges $E=\left\{\left\{X_{i},X_{i'}\right\}\right\}$.
\STATE	 Obtain the Euclidean Minimum Spanning Tree $\mathcal{G}_\text{MST}=\left(\mathcal{E},E_\text{T}\right)$ associated to $\mathcal{G}_\text{E}$.
\STATE	 For each $i=1,\ldots,n$ calculate $\epsilon_i=\max_{\left\{X_{i},X_{i'}\right\}\in E_\text{T}}\mathrm{d}\left(X_{i},X_{i'}\right)$, and open balls $B\left(X_i,\epsilon_i\right)\subset\mathbb{R}^m$ of center $X_i$ and radius $\epsilon_i$. Construct a graph $\mathcal{G}^\prime=\left(\mathcal{E},E^\prime\right)$ adding more edges between points according to the rule
\[
\{X_{i},X_{i'}\}\in E^\prime\Longleftrightarrow\overline{X_{i}X_{i'}}\subset\bigcup_{i=1}^nB\left(X_i,\epsilon_i\right),
\]
where $\overline{X_{i}X_{i'}}=\left\{X\in\mathbb{R}^m,\:\exists\lambda\in[0,1],\:X=\lambda X_{i}+(1-\lambda)X_{i'}\right\}$.
\STATE Estimate the geodesic distance between two points by the length of the shortest path in the graph $\mathcal{G}^\prime$ between these points using the Floyd's or Dijkstra's algorithm (see, e.g. \citet{Lee-Verleysen-07}).
\end{algorithmic}
\end{algorithm}

%\begin{algorithm}
%\caption{Robust approximation of $\delta$}
%\label{algorithm}
%\begin{algorithmic}[1] 
%\REQUIRE $\mathcal{E}=\left\{X_{i}\in\mathbb{R}^{m},\: i=1,\ldots,n\right\}$
%\ENSURE  $\hat{\delta}$
%\FOR	 {$i=1,\dots,n-1$}
%\FOR 	 {$i'=i+1,\dots,n$}
%\STATE	 $\text{d}(X_{i},X_{i'})=\|X_{i}-X_{i'}\|_{2}$
%\ENDFOR
%\ENDFOR
%\STATE	 Construct a complete Euclidean graph $\mathcal{G}_{E}=\left(\mathcal{E},E\right)$ with set of edges $E=\left\{\left\{X_{i},X_{i'}\right\}\right\}$.
%\STATE	 Obtain the Euclidean Minimum Spanning Tree $\mathcal{G}_{MST}=\left(\mathcal{E},E_{T}\right)$ associated to $\mathcal{G}_{E}$.
%\FOR {$i=1,\ldots,n$} 
%\STATE $\epsilon_i=\max_{\left\{X_{i},X_{i'}\right\}\in E_{T}}\mathrm{d}\left(X_{i},X_{i'}\right)$
%\STATE $B\left(X_i,\epsilon_i\right)\subset\mathbb{R}^m$
%\ENDFOR
%\STATE Construct a graph $\mathcal{G}^\prime=\left(\mathcal{E},E^\prime\right)$ adding more edges between points according to the rule
%\[
%\{X_{i},X_{i'}\}\in E^\prime\Longleftrightarrow\overline{X_{i}X_{i'}}\subset\bigcup_{i=1}^nB\left(X_i,\epsilon_i\right),
%\]
%where $\overline{X_{i}X_{i'}}=\left\{X\in\mathbb{R}^m,\:\exists\lambda\in[0,1],\:X=\lambda X_{i}+(1-\lambda)X_{i'}\right\}$.
%\STATE Compute the shortest path distances $\hat{\delta}\left(X_{i},X_{i'}\right)$ between all pairs of points $X_{i}$ and $X_{i'}$ in the $\mathcal{G}^\prime$ using Floyd's or Dijkstra's algorithm.
%\end{algorithmic}
%\end{algorithm}

Note that, the 3-step algorithm described above contains widespread graph-based methods to achieve our purpose. In this article, all graph-based calculations, such as Minimum Spanning Tree estimation or shortest path calculus, were carried out  with the \texttt{igraph} package for network analysis by \citet{Csardi2006}.

An illustration of the algorithm and its behavior when the number of observations increases are displayed respectively in Figures~\ref{met1} and~\ref{met2}. In Figure~\ref{met1}, points $\left(X_i^1,X_i^2\right)_i$ are simulated as follows:
\begin{equation}\label{sim1}
X_i^1=\frac{2i-n-1}{n-1}+\epsilon_i^1,\quad \text{ and }\quad X_i^2=2\left(\frac{2i-n-1}{n-1}\right)^2+\epsilon_i^2,
\end{equation}
where $\epsilon_i^1$ and $\epsilon_i^2$ are independent and normally distributed with mean 0 and variance 0.01 for $i=1,\ldots,n$ and $n=30$. In Figure~\ref{met2}, some results of graph $\mathcal{G}^\prime$ for $n=10,30,100$ are given. We can see that graph $\mathcal{G}^\prime$ tends to be close to the true manifold $\left\{\left(t,t^2\right)\in\mathbb{R}^2,\:t\in\mathbb{R}\right\}$ when $n$ increases.

Obviously, this estimation shows that the recovered structures in Figures~\ref{met1} and~\ref{met2} are pretty sensitive to noise. Nevertheless, to estimate a representative of a sample of curves, a prior smoothing step is almost always carried out  as in \citet{Ramsay-Silverman-05}. This is done  in Section~\ref{section5} for our real data sets.

\begin{figure}[!htb]
	\centering
	\includegraphics[width=0.333\textwidth,angle=270]{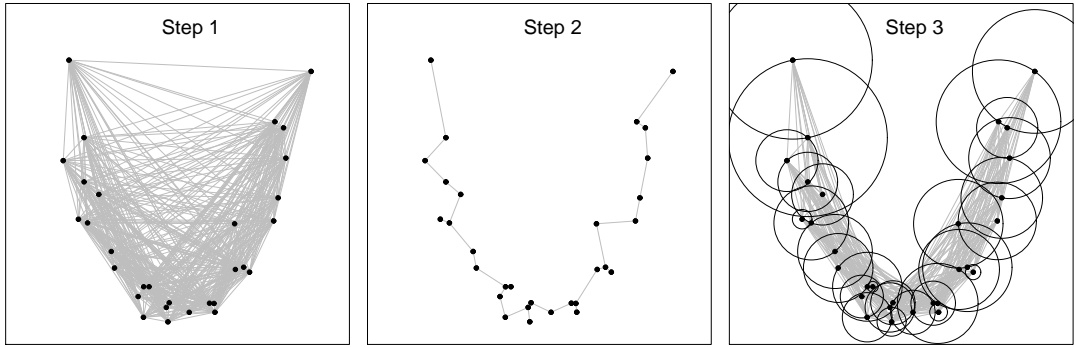}
	\caption[3-step construction of a subgraph $\mathcal{G}^\prime$ from Simulation~\eqref{sim1}]{The 3-step construction of a subgraph $\mathcal{G}^\prime$ from Simulation~\eqref{sim1}. On the left, the simulated data set (black dots) and the associated complete Euclidean graph $\mathcal{G}_\text{E}$ (Step 1). On the middle, the $\mathcal{G}_\text{MST}$ associated with the complete graph $\mathcal{G}_\text{E}$ (Step 2). On the right, the associated open balls and the corresponding subgraph $\mathcal{G}^\prime$ (Step 3).}
	\label{met1}
\end{figure}
\begin{figure}[htb]
	\centering
	\includegraphics[width=0.333\textwidth,angle=270]{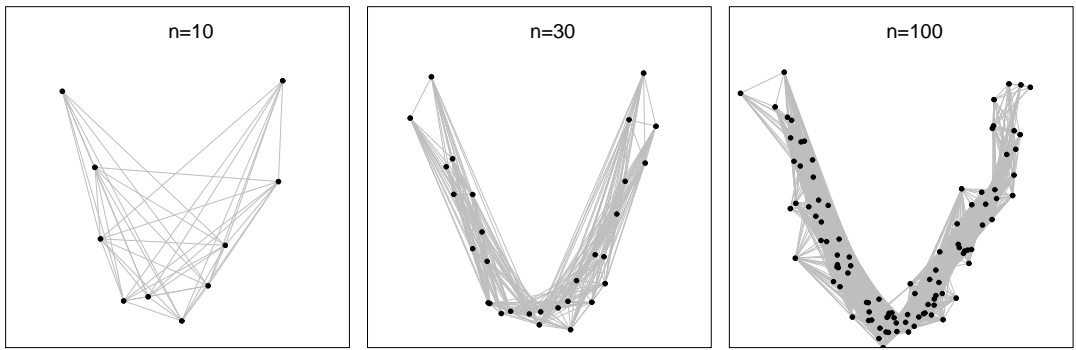}
	\caption{Evolution of graph $\mathcal{G}^\prime$ from Simulation~\eqref{sim1} for $n=10,30,100$.}
	\label{met2}
\end{figure}

\section{Application: Template estimation in a shape invariant model}
\label{section3}

In this section, we consider the case where the observations are curves warped from an unknown template $f\colon\mathcal{T}\to\mathbb{R}$. We want to study whether the {\it central} curve defined previously as the median of the data with respect to the geodesic distance provides a good pattern of the curves. Good means, in that particular case, that the intrinsic median should be close to the pattern $f$.

We consider a translation model indexed by a real valued random variable $A$ with unknown distribution on an interval $(b,c)\subset\mathbb{R}$
\begin{equation}\label{model_shift}
X_{ij} = f_{i}(t_{j}) = f\left(t_j - A_i\right),\:i = 1,\ldots,n,\:j = 1,\ldots,m,
\end{equation}
where $\left(A_i\right)_i$ are i.i.d random variables drawn with distribution $A$ which models the unknown shift parameters. This specification is an special case of the self-modeling regression mentioned in the introduction.

Under a nonparametric registration model, \citet{Maza2006} and \citet{Dupuy-Loubes-Maza-11} define the \textit{structural expectation} function of a sample of curves and build a registration procedure in order to estimate it. Following the same philosophy, but for the case of the translation effect model~\eqref{model_shift}, we propose to use as a good pattern of the dataset the \textit{Structural Median} function $f_\mathrm{SM}$ defined as 
%Under a nonparametric registration model, \citet{Maza2006} and \citet{Dupuy-Loubes-Maza-11}  propose to use as a good pattern of the dataset  the so-called \textit{structural expectation}  $f_\mathrm{SM}$ defined as 
\begin{equation}\label{def_structuralMedian}
f_\mathrm{SM} = f\left(\cdot-\mathrm{med}(A)\right),
\end{equation}
where $\mathrm{med}(A)$ denotes the median of $A$. % They build a registration procedure in order to estimate  $f_\mathrm{SM}$.

We will see that the manifold embedding point of view enables to recover this pattern. Actually,  define a one-dimensional  function in $\mathcal{M}\subset\mathbb{R}^m$ parameterized by a  parameter $a\in(b,c)\subset\mathbb{R}$ as
\[
\begin{split}
X\colon(b,c)&\to\mathbb{R}^m\\
	   a\;\;&\mapsto X(a)=\left(f\left(t_1-a\right),\ldots,f\left(t_m-a\right)\right),
\end{split}
\]
and set $\mathcal{C}=\left\{X(a)\in\mathbb{R}^m,\:a\in(b,c)\right\}$.

As soon as $X$ is a regular curve, that is, if its first derivative never vanishes,
\begin{equation}\label{cond}
X'\neq0\Longleftrightarrow\forall a\in(b,c),\:\exists j\in\{1,\dots,m\},\:f'\left(t_j-a\right)\neq0,
\end{equation}
then, the smooth mapping $X\colon a\mapsto X(a)$ provides a natural parametrization of $\mathcal{C}$ which can thus be seen as a submanifold of $\mathbb{R}^m$ of dimension 1 (\citet{Carmo1992}). The corresponding geodesic distance is given by
\begin{equation}\label{geodC}
\delta\left(X(a_1),X(a_2)\right)=\left|\int_{a_1}^{a_2}\left\|X^\prime(a)\right\|\mathrm{d}a\right|,
\end{equation}
with $X'(a)=\text{d}X(a)/\text{d}a=\left(\text{d}X_{1}(a)/\text{d}a,\ldots,\text{d}X_{m}(a)/\text{d}a\right)^\top$.

The observation model~\eqref{model_shift} can then be seen as a discretization of the manifold $\mathcal{C}$ for different values $\left(A_i\right)_i$. Hence, finding the intrinsic median of all shifted curves can be done by understanding the geometry of space $\mathcal{C}$, and thus, by approximating the geodesic distance between observed curves.
Define the intrinsic median with respect to the geodesic distance~\eqref{geodC} on $\mathcal{C}$, that is
\begin{equation}\label{to}
\widehat{\mu}_I^1=\argmin_{\mu\in\mathcal{C}}\sum_{i=1}^n\delta\left(X_i,\mu\right).
\end{equation}
The following theorem gives a minimizer.

\begin{theorem}\label{thmSM}
Under the assumption~\eqref{cond} that $X$ is a regular curve, we get that
\[
\widehat{\mu}_I^1 = \left(f\left(t_1-\widehat{\mathrm{med}}(A)\right),\ldots,f\left(t_m-\widehat{\mathrm{med}}(A)\right)\right), \]
where $\widehat{\mathrm{med}}(A)$ is the empirical median.
\end{theorem}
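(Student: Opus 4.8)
The plan is to exploit the fact that $\mathcal{C}$ is one-dimensional, so that the geodesic distance~\eqref{geodC} collapses to the absolute difference of a single scalar coordinate (the curvilinear abscissa), turning the intrinsic-median problem~\eqref{to} into the familiar scalar $L^1$ minimization whose solution is the empirical median. I would therefore first linearize the geometry by arc length, then solve the resulting $L^1$ problem, then transport the solution back to the parameter $a$ using monotonicity.

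First I would introduce the arc-length function. Fix a reference $a_0\in(b,c)$ and set $s(a)=\int_{a_0}^{a}\left\|X^\prime(u)\right\|\mathrm{d}u$. Assumption~\eqref{cond} guarantees $\left\|X^\prime(u)\right\|>0$ for every $u$, so $s$ is $C^1$ with strictly positive derivative; hence $s$ is a strictly increasing bijection from $(b,c)$ onto its image, with strictly increasing inverse $s^{-1}$. With this notation, formula~\eqref{geodC} reads $\delta\left(X(a_1),X(a_2)\right)=\left|s(a_1)-s(a_2)\right|$. Since every $\mu\in\mathcal{C}$ is of the form $\mu=X(a)$ and each observation is $X_i=X(A_i)$, the objective in~\eqref{to} becomes, after setting $\theta=s(a)$ and $\theta_i=s(A_i)$,
\[
\sum_{i=1}^{n}\delta\left(X_i,X(a)\right)=\sum_{i=1}^{n}\left|s(A_i)-s(a)\right|=\sum_{i=1}^{n}\left|\theta_i-\theta\right|.
\]

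Next I would invoke the elementary fact that the scalar map $\theta\mapsto\sum_{i=1}^{n}\left|\theta_i-\theta\right|$ attains its minimum exactly on the median interval $\left[\theta_{(k)},\theta_{(k+1)}\right]$ of the order statistics (a single point when $n$ is odd). To transport this back to $a$, I would use that a continuous strictly increasing map sends order statistics to order statistics, $s\left(A_{(j)}\right)=\theta_{(j)}$, so $s$ carries the median interval of $(A_i)_i$ bijectively onto the minimizer interval $\left[\theta_{(k)},\theta_{(k+1)}\right]$. Consequently any empirical median $\widehat{\mathrm{med}}(A)$ yields a minimizing parameter $\hat a=\widehat{\mathrm{med}}(A)$, whence $\widehat{\mu}_I^1=X\left(\widehat{\mathrm{med}}(A)\right)=\left(f\left(t_1-\widehat{\mathrm{med}}(A)\right),\ldots,f\left(t_m-\widehat{\mathrm{med}}(A)\right)\right)$, which is the asserted vector. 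This interval argument also disposes cleanly of the even-$n$ case, where the median is not unique.

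The step I expect to be the main obstacle is the reduction $\delta\left(X(a_1),X(a_2)\right)=\left|s(a_1)-s(a_2)\right|$, which tacitly requires that a shortest path between two points of $\mathcal{C}$ run monotonically along the curve. This is precisely where the regularity~\eqref{cond} and the implicit injectivity (embedding) of the parametrization $X$ must be used: $\mathcal{C}$ is genuinely a one-dimensional submanifold only when $X$ is an embedding, and a self-intersecting image could in principle admit a shorter connection not captured by~\eqref{geodC}. Once injectivity is granted, $s$ is a global monotone reparametrization by arc length and the identification with $\left|s(a_1)-s(a_2)\right|$ is exact, after which the remaining computations are routine.
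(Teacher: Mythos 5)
Your proof is correct, and its core is the same one-dimensional reduction the paper uses: by the definition~\eqref{geodC}, minimizing over $\mu=X(\alpha)\in\mathcal{C}$ is a scalar $L^1$ problem in the parameter. Where you differ is in execution. The paper works directly with the distance $D\left(A_i,\alpha\right)=\left|\int_{A_i}^{\alpha}\left\|X^\prime(a)\right\|\mathrm{d}a\right|$ on $(b,c)$: it orders the shifts, expands the cost $C(\alpha)$ as weighted sums of the gaps $D\left(A_{(i)},A_{(i+1)}\right)$, and shows by a telescoping comparison that $C\left(A_{(j)}\right)=C\left(A_{(q+1)}\right)+\sum_{i=j}^{q}(n-2i)D\left(A_{(i)},A_{(i+1)}\right)>C\left(A_{(q+1)}\right)$, restricting to odd $n=2q+1$ ``for the sake of simplicity.'' You instead change variables to arc length $\theta=s(a)$, which turns $\delta$ into $\left|\theta_1-\theta_2\right|$ exactly, invoke the classical characterization of the scalar median as the minimizer of $\theta\mapsto\sum_i\left|\theta_i-\theta\right|$, and pull the answer back through the strictly increasing map $s$. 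Your route buys two things the paper's write-up glosses over: the even-$n$ case (the whole median interval minimizes), and the fact that the minimum over all of $\mathcal{C}$ --- not merely over the data points $X\left(A_{(j)}\right)$ --- is attained at the median; the paper's displayed comparison is only between values of $C$ at order statistics, and the extension to arbitrary $\alpha$ between consecutive order statistics is left implicit. Your closing caveat about injectivity is well taken but is not a gap relative to the paper: the theorem takes~\eqref{geodC} as the definition of $\delta$ on $\mathcal{C}$, so both proofs are conditional on this parametrized arc-length metric being the intended one, and condition~\eqref{cond} alone indeed does not rule out self-intersections of the image of $X$.
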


Hence as soon as we observe a sufficient number of curves to ensure that the median and the empirical median are close, the intrinsic median is a natural approximation of \eqref{def_structuralMedian}.  Therefore, the manifold framework provides a geometrical interpretation of the structural median of a sample of curves. The estimator is thus given by 
\begin{equation}\label{estimator}
\widehat{\mu}_{I,n}^1=\argmin_{\mu\in\mathcal{E}}\sum_{i=1}^n\widehat{\delta}\left(X_i,\mu\right),
\end{equation}
where $\widehat{\delta}$ is an approximation of the unknown underlying geodesic distance, that is estimated by the algorithm described in Section~\ref{section2}.

We point out that in many situations, giving a particular model for the deformations corresponds actually to consider a particular manifold embedding for the data. Once the manifold is known, its corresponding  geodesic distance may be properly computed, as done in the translation case. So in some particular cases, the minimization in~\eqref{to} can give an explicit formulation and then it is possible to identify the resulting Fr\'echet median. Hence previous theorem may be generalized to such cases as done in \citet{Gallon-Loubes-Maza-13}.

Note first that this case only holds for the Fr\'echet median ($\alpha =1$) but not the mean for which the so-called structural expectation and the Fr\'echet mean are different. Moreover, the choice of  the median is also driven by the need for a robust method, whose good behavior will be highlighted in the simulations and applications in the following sections.

As  shown in the simulation study below, when the observations can be modeled by a set of curves warped  from an unknown template  by a general deformation  process, estimate~\eqref{estimator} enables to recover the main pattern in a better way than classical methods. Obviously, the method relies on the assumption that all the observed data belong to an embedded manifold whose geodesic distance can be well approximated by the proposed algorithm.

\section{Simulation study}
\label{section4}

In this section, the numerical properties of our estimator, called Robust Manifold Embedding (RME), defined by the equation~\eqref{estimator} in Section~\ref{section3} are studied using simulated data. The estimator is compared to those obtained with the Isomap algorithm and the Modified Band Median (MBM) estimator proposed by \citet{ArribasGil-Romo-12}, which is based on the concept of depth for functional data (see \citet{Lopez-Romo-09}). %We also compare the estimators with the cross-sectional median.
The behavior of the estimator when the number of curves is increased is also analyzed.

Four different types of simulations of increasing warping complexity for the single shape invariant model were carried out, observing $n=15,30,45,60$ curves on $m=100$ equally spaced discrete points $\left(t_j\right)_j$ in the interval $[-10,10]$. The experiment was conducted with $R=100$ repetitions. The template function $f$ and shape invariant model, for each simulation, are given as follows:

\noindent\textbf{\textit{Simulation 1}:} One-dimensional manifold defined by $f(t)=5\sin(t)/t$ and 
\[
X_{ij}=f\left(t_j+A_i\right),
\]
where $\left(A_i\right)_i$ are i.i.d uniform random variables on interval $[-5,5]$.

\noindent\textbf{\textit{Simulation 2}:} Two-dimensional manifold given by $f(t)=5\sin(t)$ and
\[
X_{ij}=f\left(A_it_j+B_i\right),
\]
where $\left(A_i\right)_i$ and $\left(B_i\right)_i$ are independent and (respectively) i.i.d uniform random variables on intervals $[0.7,1.3]$ and $[-1,1]$.

\noindent\textbf{\textit{Simulation 3}:} Four-dimensional manifold given by $f(t)=t\sin(t)$ and
\[
X_{ij}=A_if\left(B_it_j+C_i\right)+D_i,
\]
where $\left(A_i\right)_i$, $\left(B_i\right)_i$, $\left(C_i\right)_i$ and $\left(D_i\right)_i$ are independent and (respectively) i.i.d uniform random variables on intervals $[0.7,1.3]$, $[0.7,1.3]$, $[-1,1]$ and $[-1,1]$.

\noindent\textbf{\textit{Simulation 4}:} Four-dimensional manifold given by $f(t)=\phi t + t \sin(t)\cos(t)$ with $\phi=0.9$, and
\[
X_{ij}=A_if\left(B_it_j+C_i\right)+D_i,
\]
where $\left(A_i\right)_i$, $\left(B_i\right)_i$, $\left(C_i\right)_i$ and $\left(D_i\right)_i$ as in the Simulation 3.

Figure~\ref{fig03} illustrates the simulated data sets from Simulations 1-4 with $n=30$ curves for one simulation run (one of 100 repetitions). % For all of simulations it is clear that the cross-sectional median underperforms all other methods.
For Simulation 1, where there is only phase variability, all methods %, except the median,
follows the structural characteristics of the sample of curves, where the template estimated by the robust manifold approach is the closest curve to the theoretical function. The same conclusion can be inferred from Simulation 2. Indeed, for this simulation type, and for this particular simulation run, the RME estimator coincides with the theoretical template function. For the four-dimensional manifolds in Simulations 3 and 4, where there is an additional amplitude variability, the robust manifold estimator captures better the structural pattern in the sample of curves followed by the Isomap estimator. Note that in the Simulation 4, both approaches coincide. Although the MBM estimator follows the shape of the theoretical template, the estimator deviates from it in the cases 2-4.

%For simulations three and four, where there is additional amplitude variability, the best graphical results are achieved by the Isomap and robust manifold estimators. Note that in the simulation four, both of approaches coincide. On the contrary, the MBM estimator has an opposite pattern with respect to the theoretical template.

\begin{figure}[!ht]
	\centering
	\includegraphics[height=0.49\textwidth,width=0.44\textwidth,angle=270]{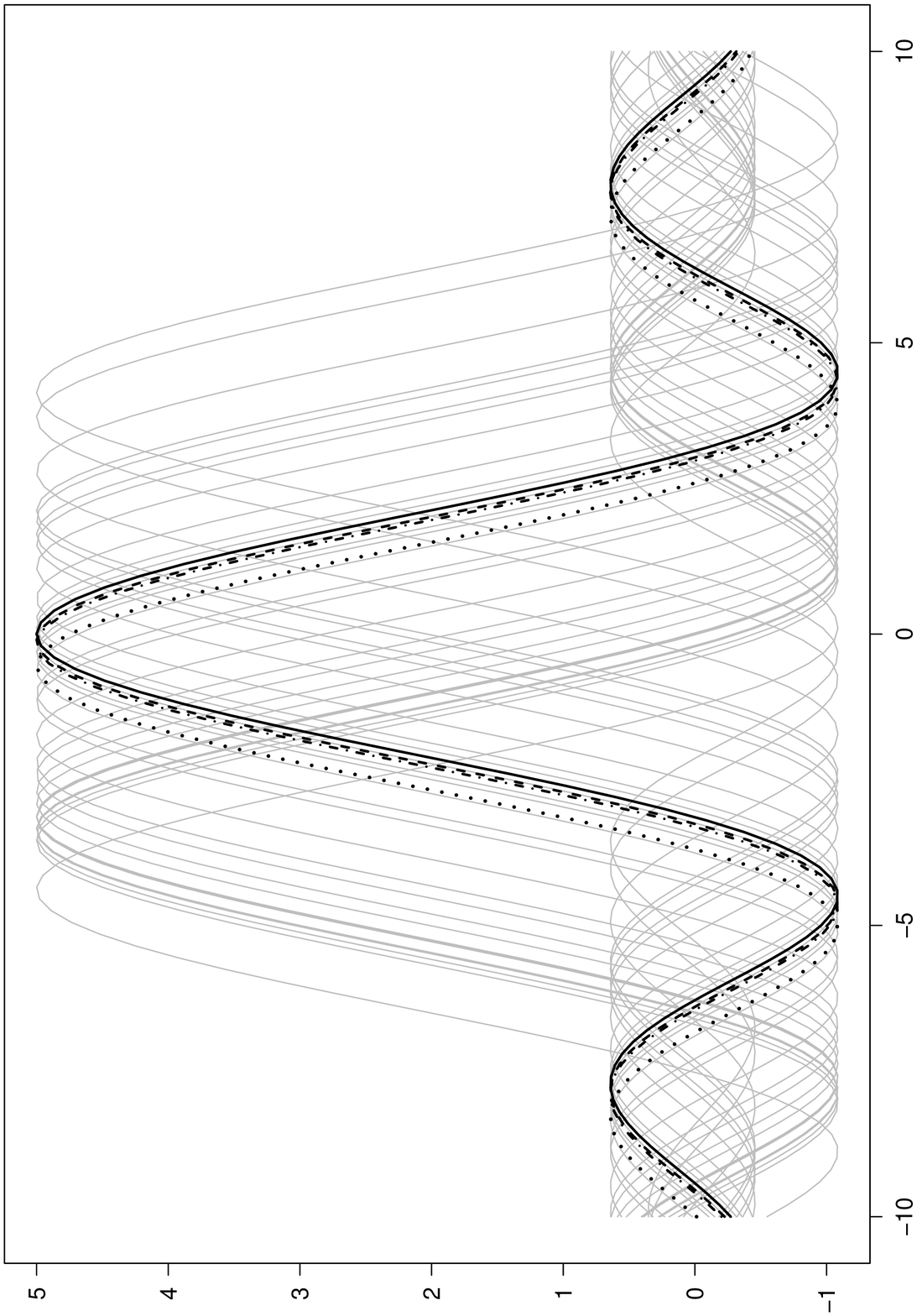}
	\includegraphics[height=0.49\textwidth,width=0.44\textwidth,angle=270]{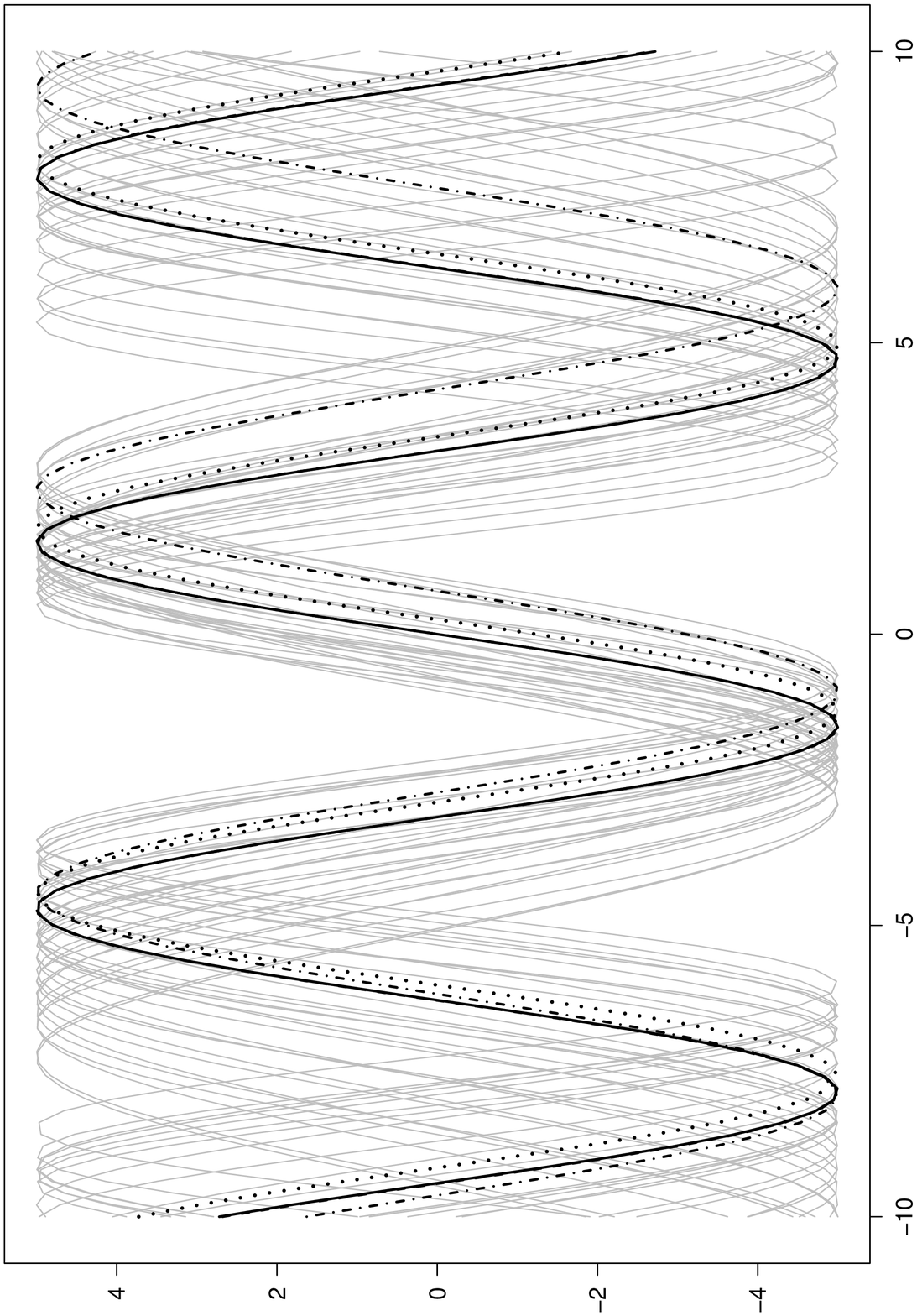}
	\includegraphics[height=0.493\textwidth,width=0.44\textwidth,angle=270]{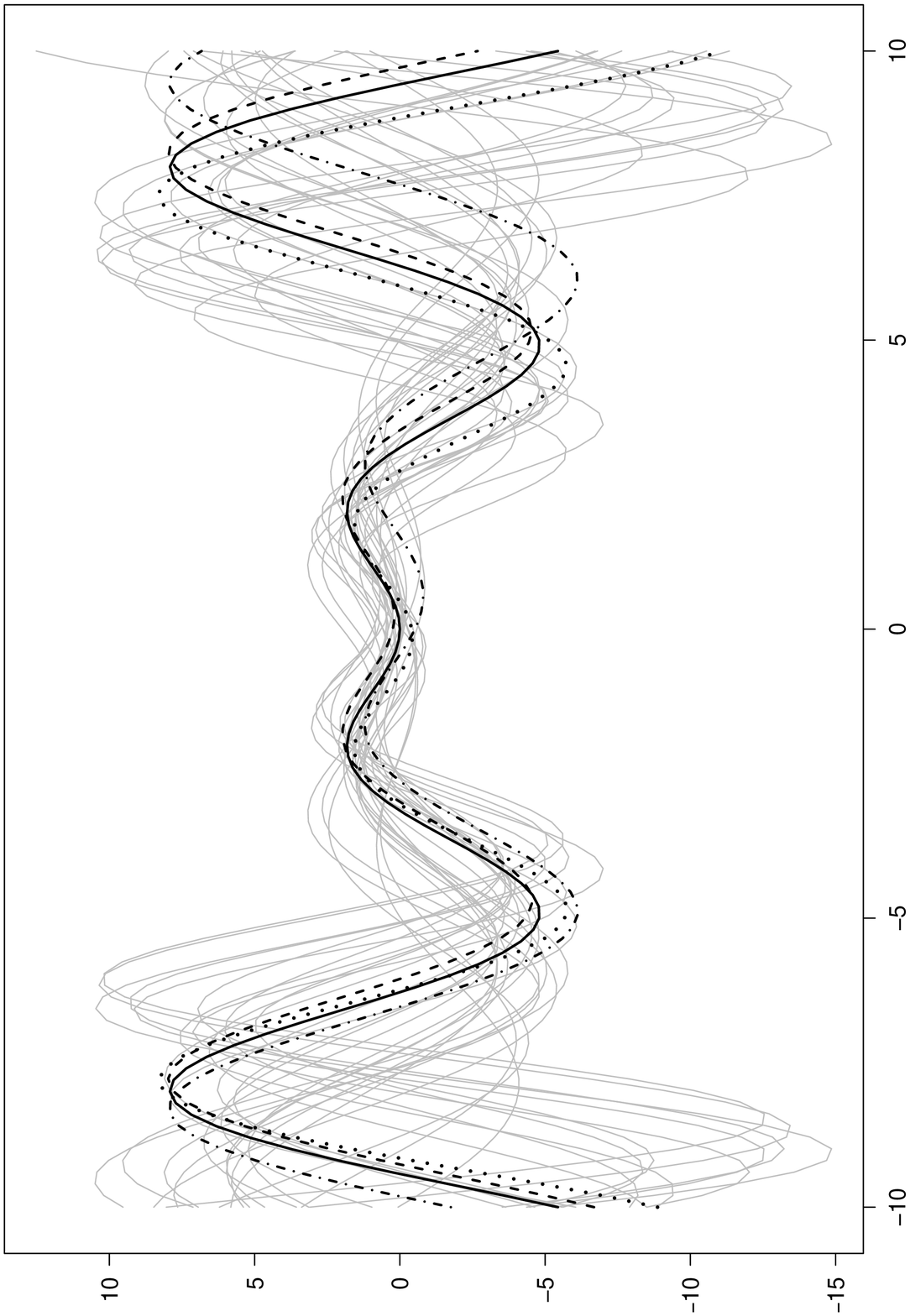}
	\includegraphics[height=0.493\textwidth,width=0.44\textwidth,angle=270]{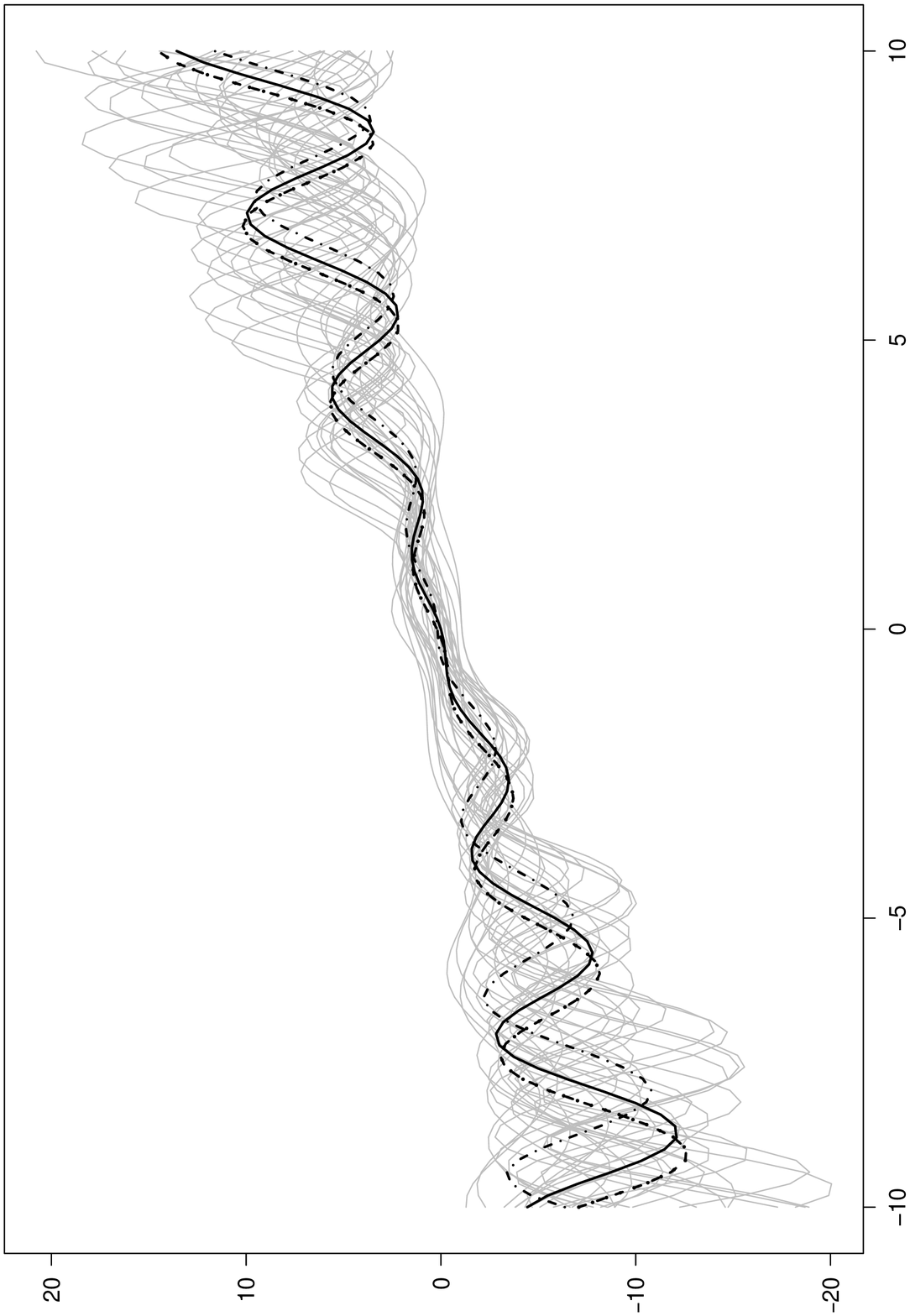}
	\caption[Simulated curves from Simulation 1-4]{Simulated curves (gray) from Simulation 1 (top left), 2 (top right), 3 (bottom left) and 4 (bottom right) for one simulation run, and the respective target template function $f$ (solid line), MBM (dash-dotted line), Isomap (dotted line), and RME (dashed line) template estimators.}
	\label{fig03}
\end{figure}

In order to compare more accurately the estimators described above, we calculate, for each one, the empirical mean squared error obtained on the $R=100$ repetitions of each type of simulation. We recall the definition, for estimator $\hat{f}$ of a given type of simulation, of the mean squared error:
\[
\text{Mean Squared Error}\left(\hat{f}\right)=\frac{1}{R}\sum_{r=1}^R\|\hat{f}_r-f\|_2^2,
\]
where, $\hat{f}_r$ is the estimation from the $r$-th repetition of the given simulation type, $f$ is the true template function and $\|\cdot\|_2$ is the classical Euclidean norm. We also highlight, for our comparisons, the fact that
\[
\text{Mean Squared Error}\left(\hat{f}\right)=\underbrace{\frac{1}{R}\sum_{r=1}^R\|\hat{f}_r-\bar{f}\|_2^2}_{\mbox{Variance}}+\underbrace{\|\bar{f}-f\|_2^2}_{\text{Squared bias}},
\]
where $\bar{f}$ is the mean of all $R$ obtained estimations.

Table~\ref{tab:comp} shows the mean squared errors, variances and squared biases of each estimator for simulations 1, 2, 3 and 4, and for different number on curves $n=15,30,45,60$ in the sample. Values have been rounded to zero decimal places to facilitate the comparisons, and the minimum values are signed in bold.

From the table, we can observe that when the number of curves in the sample is small ($n=15$) the MBM estimator has better results in terms of the MSE, Bias2 and variance for the Simulation 1. The same is true when $n=30$. With $n=45,60$ curves the MBM estimator has minimum mean squared error and variance, and our estimator has smaller bias. Comparing the Isomap and RME methods only, the latter overcomes the former. For Simulation 2, the RME estimator overcomes the MBM and Isomap estimators for $n=30,45,60$ curves, except when $n=15$, where the Isomap estimator is better. However, in this case there are not big differences between Isomap and RME estimators. As we expected, when the geometry of the curves is more complex, i.e. when we have a four-dimensional manifolds, the results are more variated. For Simulation 3, the RME estimator has a good performance with $n=45,60$. With $n=15,30$ the better results are shared by the MBM and Isomap methods. In Simulation 4, the MBM estimator has, in general, better results. Finally, note that although the theorem in Section~\ref{section3} is valid for one-dimensional manifolds generated by time shifts (Simulation 1), we can see that the intrinsic sample median estimator by approximating the corresponding geodesic distance with the robust algorithm performs well for manifolds of higher dimension (Simulations 2-4).

\begin{table}[!ht]
\caption{Comparison of estimators for Simulations 1-4 with different sample sizes.}
\label{tab:comp}
\centering{\footnotesize{
\begin{tabular}{clcccccc}\\\hline
\multirow{2}{*}{$\boldsymbol{n}$} & \multirow{2}{*}{\textbf{Statistic}} & \multicolumn{3}{c}{\textbf{Simulation1}} & \multicolumn{3}{c}{\textbf{Simulation 2}}\\\cmidrule{3-8}
	&			 &	MBM		&	Isomap	&	RME	&		MBM	 &	Isomap	&	RME	\\\hline
	&	MSE		 &	\textbf{136}		&	389	&	335		&		790	&	\textbf{400}	&	435	\\
15	&	Bias2	 &	\textbf{23}		&	152	&	118		&		141	&	\textbf{35}	&	46	\\
	&	Variance &	\textbf{113}		&	236	&	217		&		649	&	\textbf{366}	&	389	\\\hline
	&	MSE		 &	\textbf{30}		&	108	&	92		&		666	&	338	&	\textbf{268}	\\
30	&	Bias2	 &	\textbf{8}		&	13	&	10		&		98	&	34	&	\textbf{18}	\\
	&	Variance &	\textbf{22}		&	95	&	82		&		568	&	304	&	\textbf{249}	\\\hline
	&	MSE		 &	\textbf{24}		&	139	&	66		&		669	&	244	&	\textbf{155}	\\
45	&	Bias2	 &	10		&	23	&	\textbf{5}		&		120	&	20	&	\textbf{9}	\\
	&	Variance &	\textbf{14}		&	116	&	60		&		549	&	224	&	\textbf{147}	\\\hline
	&	MSE		 &	\textbf{14}		&	85	&	55		&		634	&	168	&	\textbf{136}	\\
60	&	Bias2	 &	5		&	9	&	\textbf{4}		&		161	&	5	&	\textbf{4}	\\
	&	Variance &	\textbf{10}		&	76	&	51		&		472	&	163	&	\textbf{132}	\\\hline
\multirow{2}{*}{$\boldsymbol{n}$} & \multirow{2}{*}{\textbf{Statistic}} & \multicolumn{3}{c}{\textbf{Simulation 3}} & \multicolumn{3}{c}{\textbf{Simulation 4}}\\\cmidrule{3-8}
	&			 &	MBM	&	Isomap	&	RME	&	MBM	 &	Isomap	&	RME	\\\hline
	&	MSE		 &	1350	&	\textbf{1152}	&	1171	&		\textbf{876}	&	890	&	893	\\
15	&	Bias2	 &	\textbf{251}	&	375	&	441	&		\textbf{394}	&	512	&	522	\\
	&	Variance &	1098	&	777	&	\textbf{730}	&	483	&	378	&	\textbf{371}	\\\hline
	&	MSE		 &	1025	&	\textbf{673}	&	721	&	911	&	\textbf{861}	&	876	\\
30	&	Bias2	 &	212	&	\textbf{160}	&	248	&		\textbf{470}	&	536	&	554	\\
	&	Variance &	813	&	513	&	\textbf{473}	&		441	&	325	&	\textbf{323}	\\\hline
	&	MSE		 &	1034	&	553	&	\textbf{498}	&	\textbf{820}	&	827	&	868	\\
45	&	Bias2	 &	223	&	\textbf{105}	&	141	&		\textbf{397}	&	524	&	569	\\
	&	Variance &	811	&	449	&	\textbf{356}	&		423	&	303	&	\textbf{299}	\\\hline
	&	MSE		 &	965	&	572	&	\textbf{402}	&		879	&	\textbf{776}	&	842	\\
60	&	Bias2	 &	168	&	\textbf{97}	&	122	&		\textbf{458}	&	474	&	563	\\
	&	Variance &	797	&	475	&	\textbf{280}	&		421	&	302	&	\textbf{279}	\\\hline
\end{tabular}}
}
\end{table}

\subsection*{Robustness analysis}

In order to assess the robustness of the RME estimator, we carried out an additional simulation study generating atypical curves in the functional data sets. Atypical curves can be considered in the two following settings: either real outliers which do not share any common shape with the observations or curves that are obtained by atypical deformations within the same model. The methodology that we propose is based on the existence of a common shape between the data, hence adding complete outliers may break the geometry of the data, i.e. the manifold structure, if the outliers are far from any deformations. This remark is true for any registration method when curves away from the warping model are considered. Hence robust methods without the constraint of a deformation model, for instance based on the notion of depth function, behave better. Yet, we show in the following that when considering curves warped by a deformation process which has some outliers, our method is robust with respect to the Isomap procedure and may compete with the MBM estimator.

Thereby, from $n=15,30,45,60$ curves we generated 10\% of them as atypical according to the single shape invariant model in the four type of simulations considered above, modifying the corresponding individual shift parameters but preserving the geometric structure of the curves. So, for each simulation, the non-atypical curves $X_{ij}$ with $i=1,\ldots,(n-\lceil 0.10n\rceil)$ are generated as above, and the atypical curves $\tilde{X}_{ij}$ with $i=(n-\lceil 0.10n\rceil)+1,\ldots,n$ were generated as:

\noindent\textbf{\textit{Simulation 1}:} One-dimensional manifold defined by $f(t)=5\sin(t)/t$ and 
\[
\tilde{X}_{ij}=f\left(t_j+\tilde{A}_i\right),
\]
where $\big(\tilde{A}_i\big)_i$ are i.i.d uniform random variables on interval $[4.5,6]$.

\noindent\textbf{\textit{Simulation 2}:} Two-dimensional manifold given by $f(t)=5\sin(t)$ and
\[
\tilde{X}_{ij}=f\left(\tilde{A}_it_j+\tilde{B}_i\right),
\]
where $\big(\tilde{A}_i\big)_i$ and $\big(\tilde{B}_i\big)_i$ are independent and (respectively) i.i.d uniform random variables on intervals $[0.35,0.65]$ and $[-0.5,0.5]$.

\noindent\textbf{\textit{Simulation 3}:} Four-dimensional manifold given by $f(t)=t\sin(t)$ and
\[
\tilde{X}_{ij}=\tilde{A}_if\left(\tilde{B}_it_j+\tilde{C}_i\right)+\tilde{D}_i,
\]
where $\big(\tilde{A}_i\big)_i$, $\big(\tilde{B}_i\big)_i$, $\big(\tilde{C}_i\big)_i$ and $\big(\tilde{D}_i\big)_i$ are independent and (respectively) i.i.d uniform random variables on intervals $[1.3,1.4]$, $[0.7,1.3]$, $[-1.5,-1]$ and $[1,1.5]$.

\noindent\textbf{\textit{Simulation 4}:} Four-dimensional manifold given by $f(t)=\phi t + t \sin(t)\cos(t)$ with $\phi=0.9$, and
\[
\tilde{X}_{ij}=\tilde{A}_if\left(\tilde{B}_it_j+\tilde{C}_i\right)+\tilde{D}_i,
\]
where $\big(\tilde{A}_i\big)_i$, $\big(\tilde{B}_i\big)_i$, $\big(\tilde{C}_i\big)_i$ and $\big(\tilde{D}_i\big)_i$ are independent and (respectively) i.i.d uniform random variables on intervals $[1.05,1.95]$, $[1.05,1.95]$, $[-1,1]$ and $[-1,1]$.

Figure~\ref{fig03b} illustrates the simulated data sets from Simulations 1-4 with $n=30$ curves for one replication. The curves signed in red color correspond to the atypical curves. For this particular simulation run, we see that the atypical curves has influence over the Isomap estimator for all types of simulation. For the one and two-dimensional manifolds in Simulations 1 and 2 respectively we observe that the RME estimator has a good performance. For example, note, as in the simulation study without atypical curves developed above, the RME estimator coincides with the theoretical template function for the simulation type 1, and for this particular simulation run. For complex shape functions as in Simulations 3 and 4, our estimator captures adequately the common pattern of the sample in presence on atypical curves. As expected, the depth-based estimator is robust against atypical observations.

\begin{figure}[!ht]
	\centering
	\includegraphics[height=0.49\textwidth,width=0.44\textwidth,angle=270]{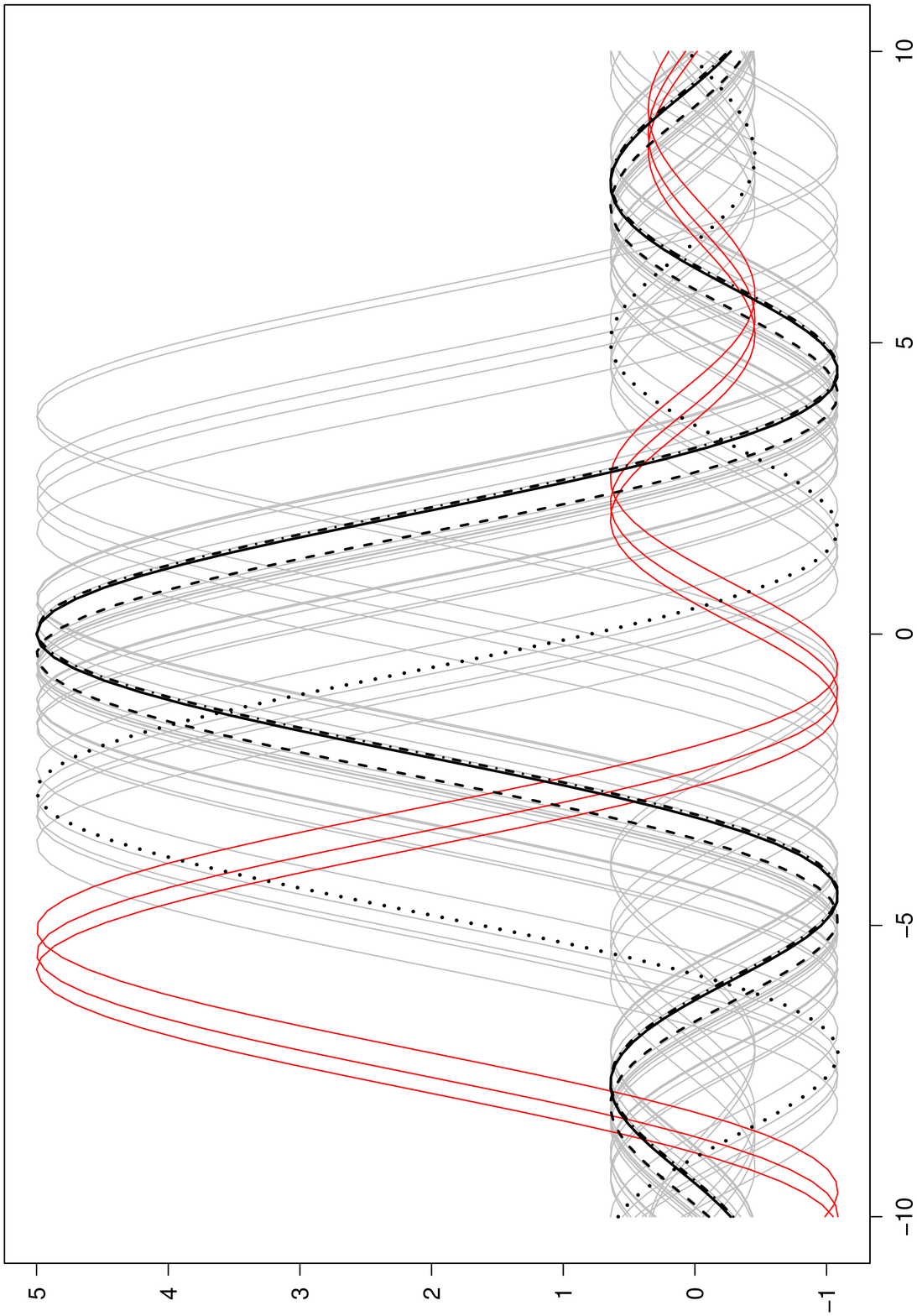}
	\includegraphics[height=0.49\textwidth,width=0.44\textwidth,angle=270]{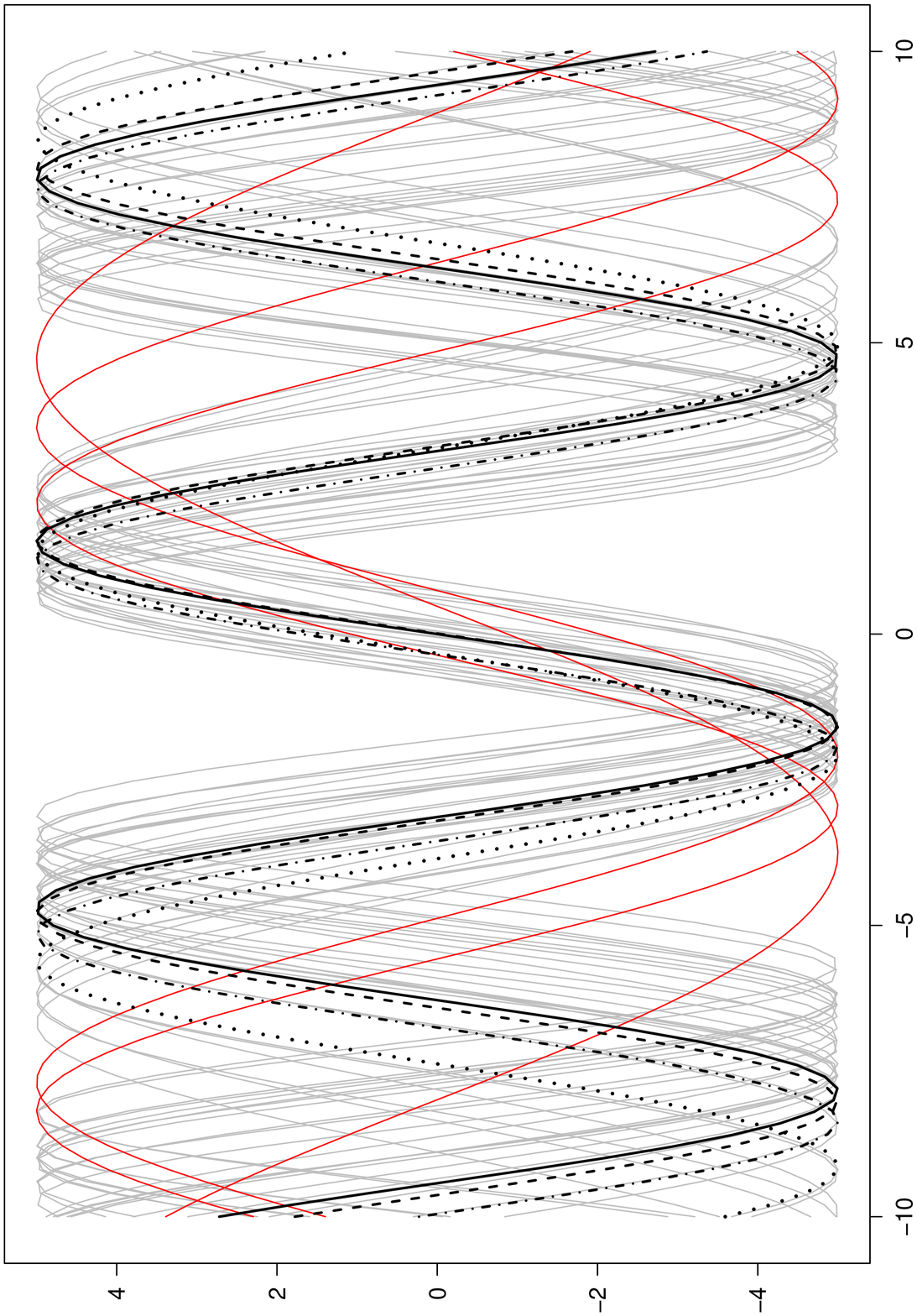}
	\includegraphics[height=0.493\textwidth,width=0.44\textwidth,angle=270]{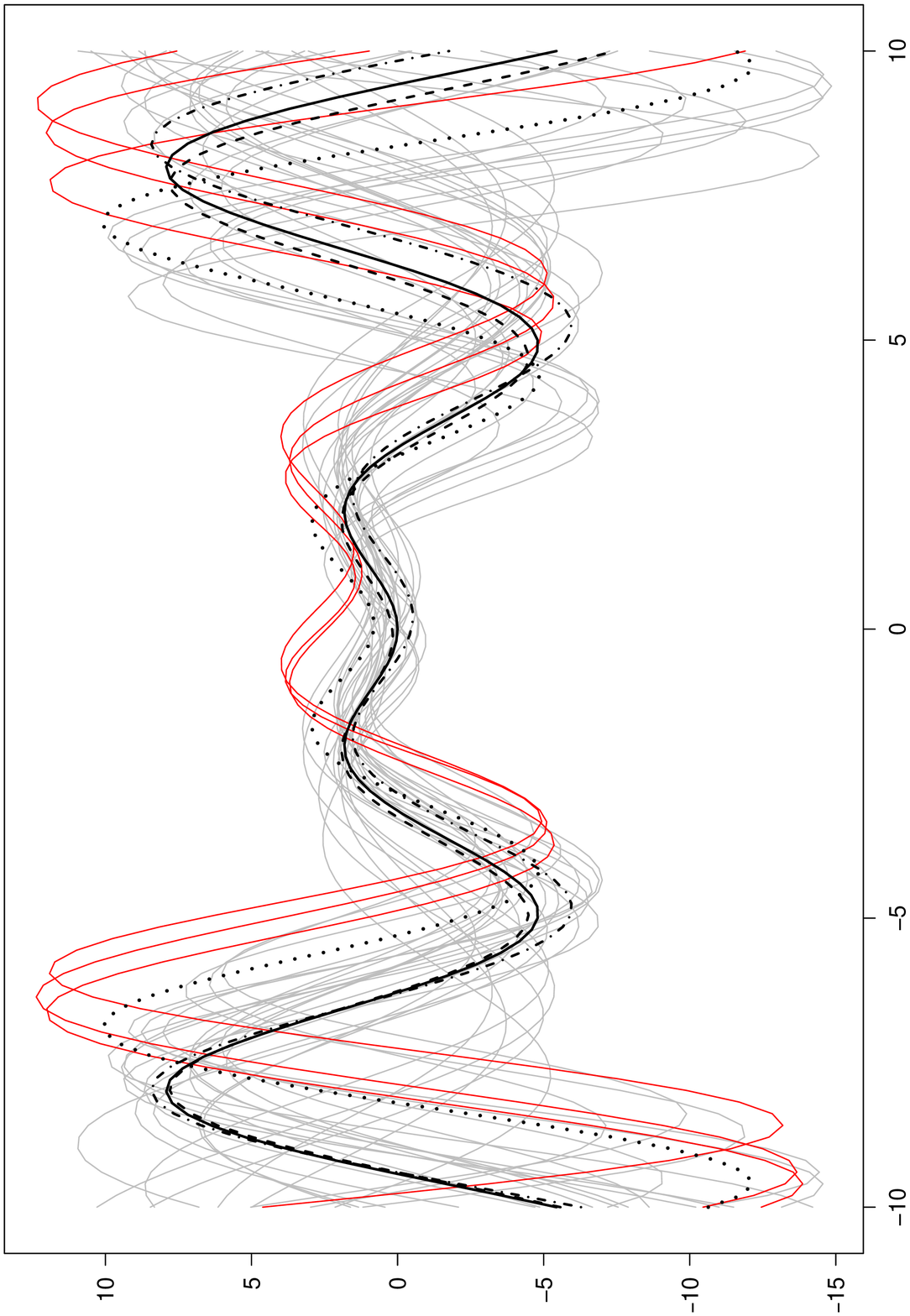}
	\includegraphics[height=0.493\textwidth,width=0.44\textwidth,angle=270]{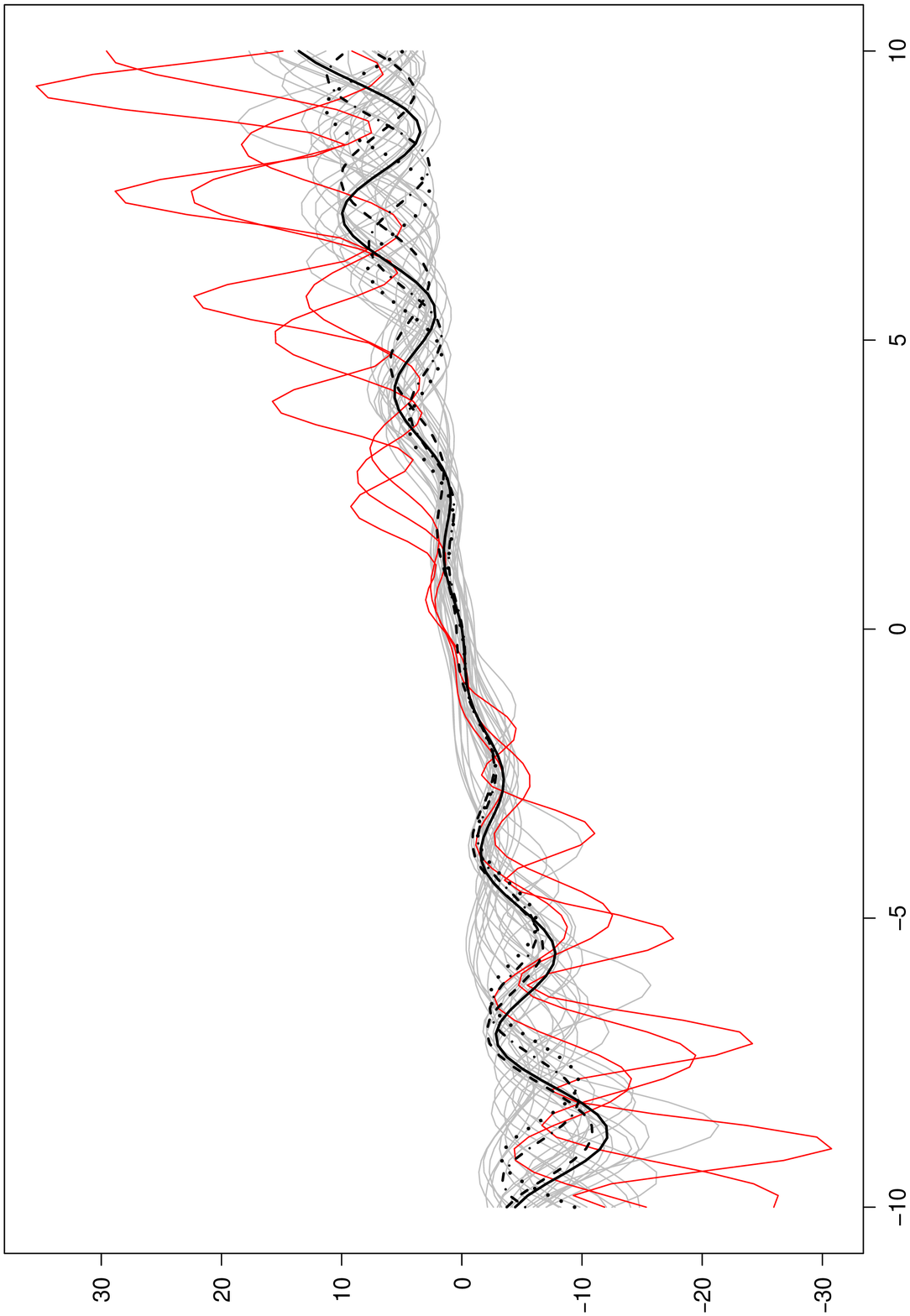}
	\caption[Simulated curves from simulation 1, 2, 3 and 4]{Simulated curves (gray) from simulation type 1 (top left), 2 (top right), 3 (bottom left) and 4 (bottom right) including atypical curves (red) for one simulation run, and the respective target template function $f$ (solid line), MBM (dash-dotted line), Isomap (dotted line), and RME (dashed line) template estimators.}
	\label{fig03b}
\end{figure}

The mean squared errors, variances and squared biases of each estimator for Simulations 1-4 and different number on curves $n=15,30,45,60$ including atypical curves are showed in the Table~\ref{tab:comp2}. For Simulation 1, the MBM estimator has the best results for all number of curves. In this case, the RME method overcomes its not robust version estimator (Isomap). Additionally, when the warping complexity increases, the RME estimator has minimum mean squared errors in most cases for Simulations 2-4. As expected, only the when the number of curves is small ($n=15$) the estimator performs less well.

\begin{table}[!ht]
\caption{Comparison of estimators including atypical curves for different sample sizes.}
\label{tab:comp2}
\centering{\footnotesize{
\begin{tabular}{clcccccc}\\\hline
\multirow{2}{*}{$\boldsymbol{n}$} & \multirow{2}{*}{\textbf{Statistic}} & \multicolumn{3}{c}{\textbf{Simulation1}} & \multicolumn{3}{c}{\textbf{Simulation 2}}\\\cmidrule{3-8}
	&			 &		MBM	&	Isomap	&	RME	&		MBM	 &	Isomap	&	RME	\\\hline
	&	MSE	&	\textbf{107}	&	452	&	400	&	830	&	\textbf{570}	&	680	\\	
15	&	Bias2	&	\textbf{21}	&	215	&	169	&	186	&	\textbf{76}	&	116	\\	
	&	Variance	&	\textbf{85}	&	237	&	232	&	645	&	\textbf{495}	&	564	\\	\hline
	&	MSE	&	\textbf{36}	&	177	&	166	&	649	&	409	&	\textbf{300}	\\	
30	&	Bias2	&	\textbf{8}	&	49	&	46	&	115	&	39	&	\textbf{20}	\\	
	&	Variance	&	\textbf{28}	&	128	&	120	&	535	&	370	&	\textbf{280}	\\	\hline
	&	MSE	&	\textbf{21}	&	121	&	81	&	523	&	296	&	\textbf{212}	\\	
45	&	Bias2	&	\textbf{9}	&	32	&	26	&	89	&	23	&	\textbf{11}	\\	
	&	Variance	&	\textbf{13}	&	89	&	56	&	433	&	273	&	\textbf{200}	\\	\hline
	&	MSE	&	\textbf{19}	&	151	&	90	&	551	&	276	&	\textbf{212}	\\	
60	&	Bias2	&	\textbf{9}	&	45	&	40	&	98	&	63	&	\textbf{46}	\\	
	&	Variance	&	\textbf{10}	&	106	&	51	&	453	&	213	&	\textbf{166}	\\	\hline
\multirow{2}{*}{$\boldsymbol{n}$} & \multirow{2}{*}{\textbf{Statistic}} & \multicolumn{3}{c}{\textbf{Simulation 3}} & \multicolumn{3}{c}{\textbf{Simulation 4}}\\\cmidrule{3-8}
	&			 &		MBM	&	Isomap	&	RME	&	MBM	 &	Isomap	&	RME	\\\hline
	&	MSE	&	1387	&	\textbf{1093}	&	1098	&	990	&	983	&	\textbf{963}	\\	
15	&	Bias2	&	\textbf{257}	&	327	&	396	&	\textbf{485}	&	561	&	538	\\	
	&	Variance	&	1129	&	766	&	\textbf{702}	&	505	&	\textbf{422}	&	425	\\	\hline
	&	MSE	&	1370	&	\textbf{856}	&	857	&	901	&	861	&	\textbf{856}	\\	
30	&	Bias2	&	260	&	\textbf{204}	&	312	&	\textbf{434}	&	505	&	511	\\	
	&	Variance	&	1110	&	652	&	\textbf{545}	&	467	&	355	&	\textbf{345}	\\	\hline
	&	MSE	&	1206	&	640	&	\textbf{547}	&	874	&	863	&	\textbf{860}	\\	
45	&	Bias2	&	234	&	\textbf{155}	&	197	&	556	&	541	&	\textbf{406}	\\	
	&	Variance	&	972	&	484	&	\textbf{350}	&	\textbf{317}	&	322	&	454	\\	\hline
	&	MSE	&	963	&	585	&	\textbf{462}	&	924	&	864	&	\textbf{861}	\\	
60	&	Bias2	&	154	&	\textbf{118}	&	165	&	\textbf{500}	&	537	&	561	\\	
	&	Variance	&	809	&	468	&	\textbf{297}	&	424	&	327	&	\textbf{301}	\\	\hline
\end{tabular}}
}
\end{table}

\section{Applications}
\label{section5}

In this section we apply the proposed robust manifold learning algorithm to extract the template function of a sample of curves on three real datasets of functional data: the well-known Berkeley Growth and Gait data in functional data applications (\citet{Ramsay-Silverman-05}), and a reflectance data of two landscape types. Our algorithm is compared with the Isomap and Modified Band Median methods.

\subsection{Berkeley growth study}

The data of the Berkeley's study consist in 31 height measurements for 54 girls and 38 boys recorded between the ages of 1 and 18 years. Intervals between measurements range from 3 months (age 1-2 years), to yearly (age 3-8), to half-yearly (age 8-18). One of the goals with this kind of data is the pattern analysis of growth velocity and acceleration curves, represented by the first and second derivatives of the height functions, in order to characterize its spurts and trends during years. The velocity and acceleration curves for girls and boys were obtained by taking the first and second order differences, respectively, of the height curves, whose functional representations were made using a B-spline smoothing (see \citet{Ramsay-Silverman-05} for details).

Figure~\ref{growth} provides the smoothed velocity curves (on the top) measured in centimeters per year (cm/year) and the smoothed acceleration curves (on the bottom) measured in centimeters per squared year (cm/year$^2$) of height for girls (on the left) and for boys (on the right). It is evident that all individuals exhibit a common velocity and acceleration pattern throughout years, but features as peaks, troughs and inflection points occur at different times for each child.

From all of the graphs in the Figure~\ref{growth}, we see, in general, that all the template estimators obtain a curve situated in the middle of the samples of curves capturing its common shape pattern appropriately. For the case of sample velocity curves of girls (top-left graph) the RME and MBM estimators coincide. The Isomap estimator deviates slightly from the center. In the case of samples of velocity and acceleration curves of boys, both the RME and Isomap estimators choose the same template function. Only in the case of acceleration curves of girls (bottom-left graph), the three methods choose different functions. In summary, we infer that the RME estimator seems to perform a good work extracting a meaningful shape curve.

%In Figure~\ref{growth}, we see that for the case of sample acceleration curves of boys (bottom-right graph), the template of all methods coincide, selecting the same template curve capturing the common shape pattern appropriately. In the case of acceleration curves of girls (bottom-left graph), the three methods choose different functions. Nevertheless, the Isomap and Robust Manifold estimators have a similar behavior and MBM estimator is quite different. Note also that, although the conceptual idea of the MBM estimator is to search for a central function in the sample, it seems to choose curves that departs from the center in some time intervals, like in the velocity (in both girls and boys cases) and acceleration (in the case of girls) curves. For the velocity curves of girls, the Isomap and MBM methods choose the same curve, and in the case of boys our robust estimator and the Isomap procedure coincide. In general, we see that the RME seems to play a good work extracting a meaningful shape curve coinciding with at least one of the two other methods.

\begin{figure}[!ht]
	\centering
	%height=0.68\textheight
	\includegraphics[height=\textwidth,width=\textwidth,angle=270]{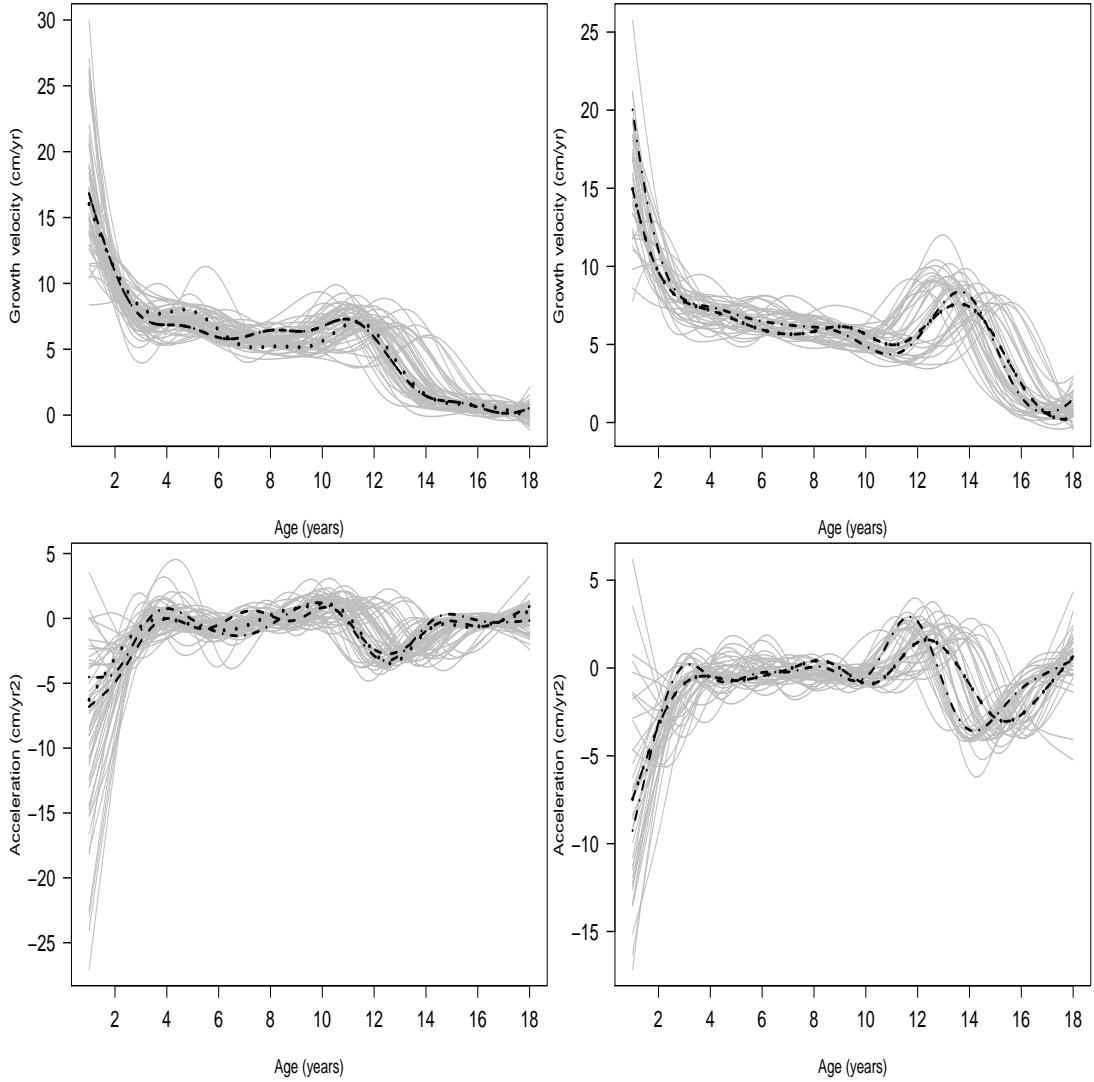}
	\caption[]{Velocity (on the top) and acceleration (on the bottom) curves of 54 girls (on the left) and 31 boys (on the right) in the Berkeley growth study (gray lines). The estimated template functions with the MBM (dashed line), Isomap (dashed-dotted line), and RME (solid line) methods.}
	\label{growth}
\end{figure}

\subsection{Gait cycle data}

For this application, we consider the data of angle measurements (in degrees) in the sagittal plane formed by the hip and knee of 39 children through a gait cycle, where time is measured in terms of the child's gait cycle such that every curve is given for values ranging between 0 and 1. The smoothed curves were obtained by fitting a Fourier basis system following the analysis of \citet{Ramsay-Silverman-05} for this data, where both sets of curves are periodic. Figure~\ref{gait} displays the curves of hip (on the left) and knee (on the right) angles observed during the gait. As we can see, a two-phase process can be identified for the knee motion, while for the hip motion there is a single-phase. Also, both sets of curves share a common pattern around which there are both phase and amplitude variability.

For this application, the template functions obtained by the Robust Manifold Estimator based on our algorithm seem to capture the salient features of the sample of hip and knee angle curves. Note also that the same template, located in the center of the samples, was chosen by all the estimators.

\begin{figure}[htb]
	\centering
	%height=0.68\textheight
	\includegraphics[height=\textwidth,width=0.6\textwidth,angle=270]{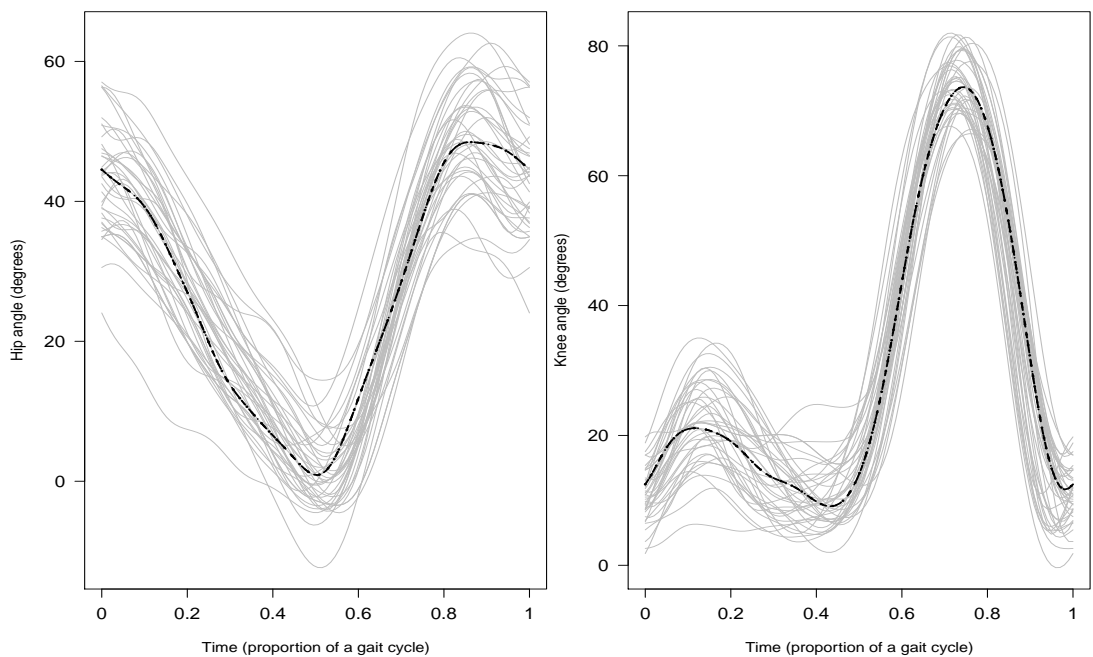}
	\caption[]{Angle curves formed by the hip (on the left) and knee (on the right) of 39 children through a gait cycle, and the MBM (dash-dotted line), Isomap (dotted line), and RME (dashed line) template estimators.}
	\label{gait}
\end{figure}

\subsection{Landscape reflectances data}

Finally, we consider two data sets where the corresponding observed curves represent the weekly reflectance profiles of two particular landscapes (corn and wheat). The reflectance is a measure of the incident electromagnetic radiation that is reflected by a given interface. For these data, there are 23 and 124 curves for corn and wheat landscapes respectively. The aim consists in extracting a representative curve of a type of landscape while observing the reflectance profiles of different landscapes of the same type. In Figure~\ref{reflectance}, the smoothed curves corresponding to reflectance patterns of two landscape types in the same region in the same period are showed. The smoothing was obtained from discrete data with B-spline basis system. The reflectance depends on the vegetation whose growth depends on the weather condition and the soil behavior. It is therefore relevant to consider that these profiles are deformations in translation, scale and amplitude of a single representative function of the reflectance behavior of each landscape type in this region at this time.

In Figure~\ref{reflectance}, we observe that for the corn landscape case, where there are relatively a few number of curves, the robust manifold estimator chooses a meaningful template curve which seems to appear at the center of curve sample, which coincides with the curve obtained by the modified band median estimator. The Isomap estimator chooses a different curve as representative function which is slightly away from the center of the sample. For the wheat landscape, all of three estimators choose a different template curve. Although all the estimated template curves follow the structural features of the sample, the RME estimator select a curve that is located more in the middle. In this application domain, extracting a curve by RME is best able to report data as reflecting their structure and thus to obtain a better representative and improve further future functional analysis.

%In Figure~\ref{reflectance}, we observe that all of three estimators choose a different curve as representative function for both landscapes. In the corn landscape case, where there are relatively a few number of curves, the robust manifold estimator chooses a meaningful template curve which seems to appear at the center of curve sample. The same conclusions can be drawn for the wheat landscape, where the local extrema are well represented. In this application domain, extracting a curve by RME is best able to report data as reflecting their structure and thus to obtain a better representative and improve further future functional analysis.

\begin{figure}[!htbp]
	\centering
	%height=0.68\textheight
	\includegraphics[height=\textwidth,width=0.6\textwidth,angle=270]{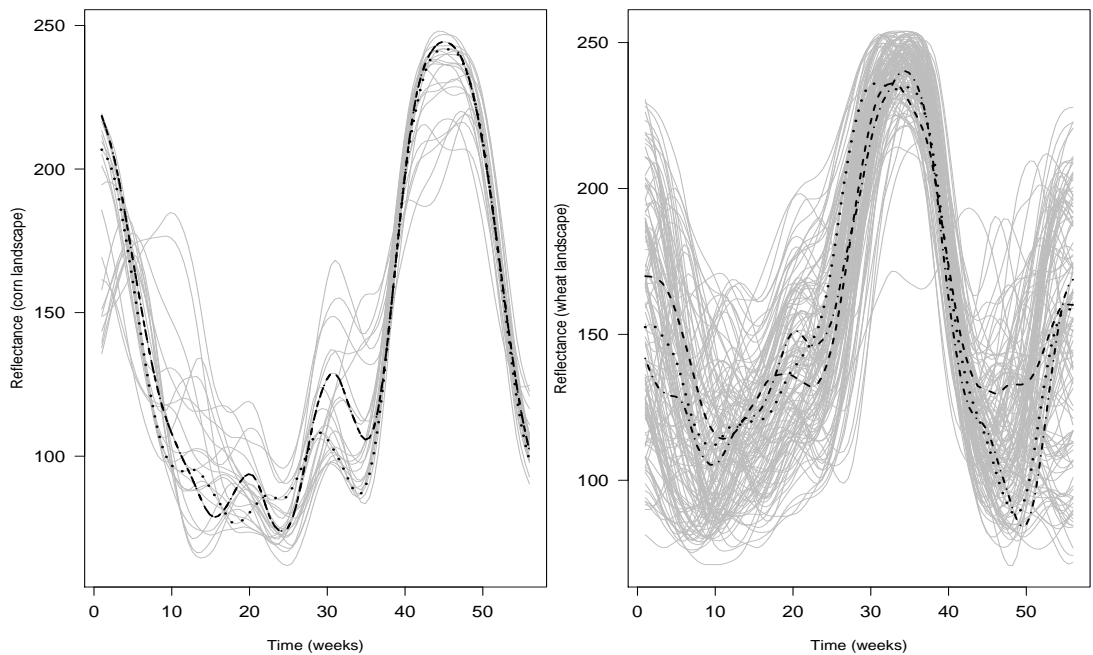}
	\caption[]{Reflectance curves of corn (left) and wheat (right) landscapes, and MBM (dash-dotted line), Isomap (dotted line), and RME (dashed line) template estimators.}
	\label{reflectance}
\end{figure}

\section{Concluding remarks}
\label{section6}

In this paper, we have proposed a robust algorithm to approximate the geodesic distance of the underlying
manifold. This approximated distance is used to build the corresponding empirical Fr\'{e}chet median of the functions. This function is a meaningful template curve for a sample of functions, which have both amplitude and time deformations.

Our approach relies on the fundamental paradigm of functional data analysis which involves treating the entire observed curve as a unit of observation rather than individual measurements from the curve. Indeed, we show that, when the structure of the deformations entails that the curve can be embedded into a manifold, finding a representative of a sample of curves corresponds to calculate an intrinsic statistic of observed curves on their unknown underlying manifold. Moreover in a translation model, i.e. where the curves are actually warped from an unknown pattern, both methodologies coincide since the structural median of a sample of curves corresponds to the intrinsic median on a one-dimensional manifold. Moreover, our methodology could be the first step for classification methods which require the choice of a good template, sharing the common properties of the dataset, see for instance in \citet{Shimizu-Mizuta-07}, \citet{Cuesta-Fraiman-07}, \citet{Sangalli.etal-10} and references therein.

From a computational point of view, our method is inspired by the ideas of the Isomap algorithm. We note that we have also used the Isomap algorithm in the simulation study and applications. Hence, our algorithm has the advantage of being parameter free and then it is of easiest use. One of the major drawbacks of these methodologies are that a relatively high number of data are required in order to guarantee a good approximation of the geodesic distance at the core of this work (see \citet{Tenenbaum-2000}). Nevertheless, our method is a robust version of the Isomap  and behaves better with respect to it. In addition it competes with a robust algorithm of the modified band median estimator. The \texttt{R} code is available at the webpage of one of the authors (\url{http://santiagogallongomez.wordpress.com/code/}) or upon request.

\section{Appendix}
\label{appendix}

\begin{proof} [Proof of Theorem~\ref{thmSM}]
Let $X$ be defined by
\[
\begin{split}
X\colon(b,c)&\to\mathbb{R}^m\\
	   a\;\;&\mapsto X(a)=\left(f\left(t_1-a\right),\ldots,f\left(t_m-a\right)\right)
\end{split}
\]
and set $\mathcal{C}=\left\{X(a)\in\mathbb{R}^m,\:a\in(b,c)\right\}$.

By assumption~\eqref{cond}, $\mathcal{C}$ can be seen as a submanifold of $\mathbb{R}^m$ of dimension 1 with corresponding geodesic distance defined by~\eqref{geodC}.

\noindent Take $\mu=X(\alpha)$ with $\alpha\in (b,c)$, thus we can write
\[
\begin{split}
\widehat{\mu}_I^1&=\argmin_{X(\alpha)\in\mathcal{C}}\sum_{i=1}^n\delta\left(X\left(A_i\right),X(\alpha)\right)\\
&=\argmin_{\mu\in\mathcal{C}}\sum_{i=1}^nD\left(A_i,\alpha\right)= \argmin_{\mu\in\mathcal{C}}C(\alpha)
\end{split}
\]
where $D$ is distance on $(b,c)$ given by
$$D\left(A_i,\alpha\right)=\left|\int_{A_i}^{\alpha}\left\|X^\prime(a)\right\|\mathrm{d}a\right|.$$
In the following, let $\left(A_{(i)}\right)_i$ be the ordered parameters. That is $A_{(1)}<\dots<A_{(n)}.$
Then, for a given $\alpha\in (b,c)$ such that $A_{(j)}<\alpha<A_{(j+1)}$, we get that
\[
\begin{split}
C(\alpha)&=jD\left(\alpha,A_{(j)}\right)+\sum_{i=1}^{j-1}iD\left(A_{(i)},A_{(i+1)}\right)\\
&\quad+(n-j)D\left(\alpha,A_{(j+1)}\right)+\sum_{i=j+1}^{n-1}(n-i)D\left(A_{(i)},A_{(i+1)}\right).
\end{split}
\]
For the sake of simplicity, let $n=2q+1$. It follows that $\widehat{\mathrm{med}}(A)=A_{(q+1)}$. Moreover, let $\alpha=A_{(j)}$ with $j<q+1$. By symmetry, the case $j>q+1$ holds. Then, we rewrite $C\left(\alpha\right)$ as
\begin{equation*}
C\left(\alpha\right)=\sum_{i=1}^{j-1}iD\left(A_{(i)},A_{(i+1)}\right)+\sum_{i=j}^{n-1}(n-i)D\left(A_{(i)},A_{(i+1)}\right)
\end{equation*}
and, by introducing $A_{(q+1)}$, we get that
\[
\begin{split}
C(\alpha)&=\sum_{i=1}^{j-1}iD\left(A_{(i)},A_{(i+1)}\right)+\sum_{i=j}^qiD\left(A_{(i)},A_{(i+1)}\right)\\
&\quad+\sum_{i=j}^q(n-2i)D\left(A_{(i)},A_{(i+1)}\right)+\sum_{i=q+1}^{n-1}(n-i)D\left(A_{(i)},A_{(i+1)}\right).
\end{split}
\]
Finally, we notice that
\begin{equation*}
C(\alpha)=C\left(A_{(q+1)}\right)+\sum_{i=j}^q(n-2i)D\left(A_{(i)},A_{(i+1)}\right)>C\left(A_{(q+1)}\right).
\end{equation*}
And the result follows since
$$\widehat{\mu}_I^1=\argmin_{\mu\in\mathcal{C}}C(\alpha)=X\left(A_{(q+1)}\right)=X\left(\widehat{\mathrm{med}}(A)\right)=\widehat{f}_\mathrm{SM}.$$
\end{proof}

\bibliography{newpaper.bib}
\end{document}